\newcommand{\x}{\mathbf{x}}
\def\e{\varepsilon}
\newtheorem {teo} {Theorem}
\newtheorem {propo} {Proposition}
\newtheorem {obs} {Remark}
\newtheorem{main}{Theorem}
\title[Normally hyperbolic limit tori near monodromic singularities in 3D]{Normally hyperbolic limit tori near monodromic singularities in 3D polynomial vector fields}
\author[ L. Q. Arakaki]{Lucas Q. Arakaki$^*$}
\address{Departamento de Matem\'{a}tica, Instituto de Matem\'{a}tica, Estatística e Computa\c{c}\~{a}o Cient\'{i}fica (IMECC), Universidade
Estadual de Campinas (UNICAMP), Rua S\'{e}rgio Buarque de Holanda, 651, Cidade Universit\'{a}ria Zeferino Vaz, 13083--859, Campinas, SP, Brazil.} 
\email{larakaki@unicamp.br}
\author[ L. F. S. Gouveia]{Luiz F. S. Gouveia}
\address{Departamento de Matem\'{a}tica, Instituto de Matem\'{a}tica, Estatística e Computa\c{c}\~{a}o Cient\'{i}fica (IMECC), Universidade
	Estadual de Campinas (UNICAMP), Rua S\'{e}rgio Buarque de Holanda, 651, Cidade Universit\'{a}ria Zeferino Vaz, 13083--859, Campinas, SP, Brazil.} 
\email{lgouveia@unicamp.br}
\author[D. D. Novaes]{Douglas D. Novaes}
\address{Departamento de Matem\'{a}tica, Instituto de Matem\'{a}tica, Estatística e Computa\c{c}\~{a}o Cient\'{i}fica (IMECC), Universidade
Estadual de Campinas (UNICAMP), Rua S\'{e}rgio Buarque de Holanda, 651, Cidade Universit\'{a}ria Zeferino Vaz, 13083--859, Campinas, SP, Brazil.} \email{ddnovaes@unicamp.br}
\thanks{$^*$ Corresponding author.}
\subjclass[2020]{34C23, 34C29, 34C45}
\keywords{Limit tori, Normally hyperbolic invariant manifolds, Averaging theory}
\begin{document}

\begin{abstract}
We investigate the maximal number $N_h(m)$ of normally hyperbolic limit tori in three-dimensional polynomial vector fields of degree $m$, which extends the classical notion of Hilbert numbers to higher dimensions. Using recent developments in averaging theory, we show the existence of families of vector fields near monodromic singularities, including both Hopf-zero and nilpotent-zero cases, that exhibit multiple nested normally hyperbolic limit tori. This approach allows us to establish improved lower bounds: $N_h(2) \geq 3$, $N_h(3) \geq 5$, $N_h(4) \geq 7$, and $N_h(5) \geq 13$, which are currently the best available in the literature. 
Furthermore, these bounds are extended using the strict monotonicity of the function $N_h(m)$ and a recursive construction inspired by the Christopher–Lloyd method, leading to new estimates for higher degrees which improves all the previously known results.
\end{abstract}

\maketitle


\section{Introduction}

Motivated by Hilbert's 16th Problem, the quest to determine the finiteness of the so-called Hilbert number $H(m)$, $m\geq 2$---the maximum number of limit cycles that planar polynomial vector fields of degree $m$ can exhibit---has driven research for over a century. This problem has been one of the main catalysts for developments in the qualitative theory of differential equations. 

Most advances concerning $H(m)$ have centered on establishing lower bounds. Early contributions by Otrokov~\cite{O54}, Il’yashenko~\cite{I91}, and Basarab-Horwath \& Lloyd~\cite{BL88} demonstrated that $H(m)$ grows at least quadratically with $m$. This was later refined by Christopher \& Lloyd~\cite{ChristopherLloyd}, who showed that $H(m)$ grows at least as fast as $m^2 \log m$ (see also~\cite{lvarez2020,HL12,IS00}). In recent decades, substantial progress—particularly in the study of the cyclicity problem, both local and global—has led to further improvements in the known lower bounds for $H(m)$ at low degrees. The best-known local lower bounds have been obtained through a weaker version of Hilbert’s 16th Problem, proposed by Arnol’d~\cite{Arnold1977}, which focuses on the number $M(m)$ of limit cycles that bifurcate from the period annulus of a Hamiltonian system under polynomial perturbations of degree $m$. A classical result in this direction is due to Bautin~\cite{Bautin54}, who proved that $M(2) = 3$. More recently, improvements have been obtained for $m > 2$, with current best-known results including $M(3) \geq 12$ and $M(4) \geq 21$\cite{GouveiaGine}, $M(5) \geq 33$, $M(7) \geq 76$, $M(8) \geq 76$, $M(9) \geq 88$\cite{GouveiaTorre21}, and $M(6) \geq 48$~\cite{BastosBuzzi_cotas}. As for global estimates, the best known lower bounds for the Hilbert number include $H(2) \geq 4$\cite{ChenWang_cota}, $H(3) \geq 13$\cite{LiuLi_cota}, and, for higher degrees, $H(4) \geq 28$, $H(5) \geq 37$, $H(6) \geq 53$, $H(7) \geq 74$, $H(8) \geq 96$, $H(9) \geq 120$, and $H(10) \geq 142$~\cite{ProhensTorre_cotas}.

It is worth noting that the limit cycles associated with the aforementioned local lower bounds arise from perturbations of a \emph{Hopf equilibrium}. Another important type of singularity where the cyclicity problem can also be studied is the \emph{nilpotent equilibrium}. In this direction, the works \cite{Alvarez1,Alvarez2,GarciaNil} provide a systematic investigation into the cyclicity of nilpotent singular points. However, the number of limit cycles obtained in these studies is significantly smaller than those found in the case of Hopf singularities.


In higher dimensions, isolated invariant tori play a role analogous to that of limit cycles in the plane, providing a natural framework for extending classical questions related to existence, stability, number, and distribution. From this perspective, the authors of~\cite{NP25} proposed an analogue of Hilbert's 16th Problem in three-dimensional space: to investigate the maximal number $ N(m) $ of isolated invariant tori in polynomial vector fields of degree $ m $. More precisely, for a polynomial vector field 
\[
X = P(x,y,z)\,\partial_x + Q(x,y,z)\,\partial_y + R(x,y,z)\,\partial_z,
\]
let $ \tau(P, Q, R) $ denote the number of isolated invariant tori, hereafter referred to as \emph{limit tori}, in its phase space. The corresponding maximal number is then defined as
\[
N(m): = \sup \left\{ \tau(P, Q, R) : \deg(P), \deg(Q), \deg(R) \leqslant m \right\}.
\]

Among the compact invariant manifolds of vector fields, \emph{normally hyperbolic} ones are of particular interest due to their robust dynamical properties, such as persistence under small perturbations and well-defined stable and unstable directions (see, for instance, \cite{Fenichel1971,hirschpughshub,wigginsnormally}). In this context, a normally hyperbolic limit torus serves as a natural higher-dimensional analogue of a hyperbolic limit cycle. This analogy motivates the following definition:
\[
N_h(m): = \sup\left\{ \tau_h(P, Q, R) : \deg(P), \deg(Q), \deg(R) \leqslant m \right\},
\]
where $ \tau_h(P, Q, R) $ denotes the number of normally hyperbolic limit tori of the vector field $ X = P \partial_x + Q \partial_y + R \partial_z $. It is evident that $ N_h(m) \leqslant N(m) $.

Analogously to the Hilbert number $H(m)$, it is an open question whether $N(m)$ or $N_h(m)$ are finite.

Unlike limit cycles, for which numerous analytical tools exist, the identification of limit tori in differential systems remains a challenging task due to the scarcity of available analytical methods. Among the few effective approaches, averaging theory stands out as a particularly powerful technique. Initially developed by Krylov, Bogoliubov, and Mitropolski~\cite{BM,BK}, and later extended by Hale~\cite{haleinvariant,haleoscillations}, this method has been further generalized in recent works~\cite{Novaes2023MathAnn,Pereira23Mechanism} to encompass broader classes of differential equations of the form $\dot{\x}=\e F(t,\x,\e)$ . These advancements include important results concerning the existence of invariant tori, their stability properties, and the qualitative nature of the dynamics they support. A key feature of this approach is the reduction of the non-autonomous system $\dot{\x}=\e F(t,\x,\e)$ to a simpler autonomous guiding system:
\begin{equation}\label{introgs}
\dot{\x} = \dfrac{1}{T} \int_0^T F_1(t, \x) \, dt,
\end{equation}
in which the existence of hyperbolic limit cycles implies the existence of normally hyperbolic invariant tori in the original system. This connection offers a concrete and computationally feasible criterion for detecting such tori, making the method highly valuable in both theoretical and applied contexts. Indeed, it has proven effective in analyzing the emergence of invariant tori across a variety of applied models (see, for example,~\cite{Candido_2020Rossler,CandidoVallsVDPD,Messias2024}). It is important to note that the differential equation~\eqref{introgs} corresponds to the guiding system when the average on its right-hand side is not identically zero. In cases where this average vanishes identically, a higher-order analysis can be employed to define the guiding system, and the same result remains valid. These concepts will be discussed in detail in Section~\ref{Sec:Prelim}.

This method was employed in~\cite{NP25} to establish the first general lower bounds for $ N_h(m) $ for all $ m \geqslant 6 $, in particular, it was shown that $ N_h(6) \geqslant 4 $. For lower degrees, secondary Hopf bifurcation \cite{Candido2020JDE} has been used to detect invariant tori in specific systems. In~\cite{Candido_2020Rossler}, one such torus was found in a particular quadratic R\"ossler system, while in~\cite{CandidoVallsVDPD}, two tori were identified in a cubic system. Since the invariant tori arising from secondary Hopf bifurcations are normally hyperbolic (see, for instance,~\cite{PN25b}), these results imply that $ N_h(2) \geqslant 1 $ and $ N_h(3) \geqslant 2 $.
 
It is worth noting that if there exists a three-dimensional vector field of degree $ m_0 $ with $ \tau_0 $ limit tori, then, as shown in \cite{NP25} via a Christopher--Lloyd-type transformation (see \cite{ChristopherLloyd}), one obtains  
\begin{equation}\label{CLmeth}
N\big(2^k(m_0+2)-2\big)\geq 8^k \tau_0.
\end{equation}  
This inequality also holds for $N_h$, provided that the $\tau_0$ limit tori are normally hyperbolic. Consequently, lower bounds established for low degrees can be systematically extended to higher degrees. We anticipate that, here, the averaging theory will be employed to establish new lower bounds for $N_h(m)$ with $m \leq 5$, which will then be extended to higher degrees via inequality~\eqref{CLmeth}.

\subsection{Goal and main results} Motivated by developments in the planar case, specifically, the efforts to establish and refine lower bounds for $H(m)$, this work aims to improve the known lower bounds for $N_h(m)$ for all $m \geq 2$. Our approach is based on recent advances in the averaging method, which we apply to detect limit tori in three-dimensional vector fields that are close to ones exhibiting monodromic behavior around the z-axis. Specifically, we focus on vector fields of the form
\begin{equation}\label{eq:01}
X = X_0 + \varepsilon X_1(\mu) + \varepsilon^2 X_2(\mu),
\end{equation}
where $ \mu \in \Lambda \subset \mathbb{R}^M $, $ \varepsilon > 0 $ is a small parameter, and $ X_1 $, $ X_2 $ are polynomial vector fields of degree $ m $. The unperturbed part is given by 
\[
 X_0 = -y\,\partial_x + x^{2n-1}\,\partial_y,
 \] with $ n \geqslant 1,$ and has the $ z $-axis as an invariant line of singularities, around which nearby orbits exhibit rotational (monodromic) behavior. This follows from the fact that each plane $ z = z_0 $ is invariant under the flow of $ X_0 $, and on each such plane, the origin is a monodromic singularity. When $ n = 1 $, the origin of $ X_0 $ is a \emph{Hopf-zero} singularity; for $ n > 1 $, we refer to it as a \emph{nilpotent-zero} singularity. In this latter case, $ n $ is called by \emph{Andreev number} associated with the singularity. 
 
With this approach we are able to, firstly, produce several examples of $3$D vector fields with degree  $ m \in \{2,3,4,5\} $  for which multiple nested limit tori unfold from the monodromic singularity at the origin. As a consequence, we obtain our main result:
\begin{main}\label{thm:main}
$N_h(2)\geqslant 3$, $N_h(3)\geqslant 5$, $N_h(4)\geqslant 7$, and $N_h(5)\geqslant 13$.
\end{main}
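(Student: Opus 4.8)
The plan is to realize each bound by exhibiting an explicit family of the form \eqref{eq:01} whose associated \emph{guiding system}, obtained through the averaging reduction described in the introduction, possesses the prescribed number of hyperbolic limit cycles. Since each such cycle produces a normally hyperbolic limit torus of $X$ for $\varepsilon>0$ small, counting the hyperbolic limit cycles of a planar guiding system yields directly the claimed lower bounds for $N_h(m)$.

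First I would pass to generalized polar coordinates $(r,\theta,z)$ adapted to the monodromic singularity, writing $(x,y)$ in terms of the generalized trigonometric functions $(\mathrm{Cs}(\theta),\mathrm{Sn}(\theta))$ that linearize the flow of $X_0=-y\,\partial_x+x^{2n-1}\,\partial_y$; these satisfy $\mathrm{Cs}'=-\mathrm{Sn}$ and $\mathrm{Sn}'=\mathrm{Cs}^{2n-1}$, are periodic of some period $T=T(n)$, and conjugate the unperturbed dynamics to a rigid rotation in $\theta$ at fixed $(r,z)$. Taking $\theta$ as the new time and dividing by the dominant angular speed, the perturbed field \eqref{eq:01} becomes, on the half-space $r>0$, a $T$-periodic non-autonomous system $\tfrac{d}{d\theta}(r,z)=\varepsilon F_1(\theta,r,z)+\varepsilon^2 F_2(\theta,r,z)+\cdots$, to which the averaging machinery recalled in the introduction applies.

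Next I would compute the guiding system in the $(r,z)$ half-plane. If the $\theta$-average of $F_1$ is not identically zero, it already defines the first-order guiding field $\tfrac{1}{T}\int_0^T F_1\,d\theta$; otherwise, choosing $X_1,X_2$ so that this average vanishes, I would invoke the second-order averaged field, which in addition involves $X_2$ and a Lie-bracket correction built from $X_1$. Because $X_1,X_2$ have degree $m$ and the generalized trigonometric moments $\int_0^T \mathrm{Cs}^{i}\,\mathrm{Sn}^{j}\,d\theta$ are explicit, the guiding field is a planar polynomial vector field in $(r,z)$ whose coefficients are linear (at first order) or quadratic (at second order) in the entries of $\mu$. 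The heart of the argument is then to select $\mu$—and, crucially, the Andreev number $n$—so that this guiding system admits the maximal possible number of hyperbolic limit cycles, which I would detect by reducing the search to the simple positive zeros of a one-dimensional displacement (bifurcation) function along a transversal ray and showing that its relevant coefficients can be chosen independently.

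The main obstacle will be precisely this optimization: securing exactly $3,5,7$, and $13$ \emph{simple} zeros for $m=2,3,4,5$. This is itself a delicate planar cyclicity problem, in which the admissible polynomial structure of the guiding field is dictated by the averaging order and by the parity and vanishing pattern of the generalized trigonometric moments. I expect the largest gains—and in particular the sharp jump to $13$ at degree $5$—to require second-order averaging together with a nilpotent-zero unperturbed field ($n>1$), whose moments annihilate more low-order terms and thereby liberate additional independent coefficients in the displacement function. Verifying that the chosen configurations indeed produce the stated numbers of simple zeros—so that the cycles are hyperbolic and hence the tori normally hyperbolic—is where the explicit, case-by-case computation must be carried through.
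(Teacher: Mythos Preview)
Your overall skeleton matches the paper's: pass to generalized polar coordinates, take $\theta$ as time, average, and read off normally hyperbolic limit tori from hyperbolic limit cycles of the planar guiding system via Theorem~\ref{Teo:InvariantTori}. Two substantive points of your plan, however, diverge from what actually works.

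First, your allocation of averaging order is inverted. In the paper, the bounds for $m=3,4,5$ (including the jump to $13$) are all obtained from the \emph{first}-order guiding system; second-order averaging is used only for $m=2$, and this is forced because the first-order guiding system for quadratic $X$ always has a time-reversible center at the relevant equilibrium, so no Hopf cycles bifurcate. The reason nilpotent-zero helps is not that ``moments annihilate more low-order terms'': it is that the first-order guiding system \eqref{eq:guidingformula} has degree $d(m,n)=n(m-1)+1$ for odd $m$ (and $nm$ for even $m$), so raising the Andreev number $n$ raises the degree of the planar problem and hence its potential cyclicity (Remark~\ref{obs:degreeguiding}). For $m=5$ the paper takes $n=3$, giving a degree-$13$ guiding system, and that is where the $13$ comes from.

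Second, your mechanism for producing limit cycles in the guiding system---``simple positive zeros of a one-dimensional displacement function along a transversal ray''---is not what the paper does and would be hard to push to these numbers. The paper instead arranges a Hopf equilibrium $(\rho,w_0)$ in the guiding system, computes Lyapunov coefficients $L_i(\mu)$, and applies Theorems~\ref{Teo:ChristopherFocus} and \ref{Teo:ChristopherCenter} (degenerate Hopf bifurcation from a weak focus or from a center) to unfold $k$ small-amplitude hyperbolic limit cycles; for the larger cases this requires verifying rank conditions and nonvanishing of high-order $L_i$, done via Poincar\'e--Miranda and Gerschgorin (Theorems~\ref{Teo:PM}, \ref{Teo:Gerschgorin}) in a computer-assisted step. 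Your ray/displacement idea would need an independent justification that the guiding systems you can reach have enough free coefficients to force the required number of simple zeros, and the paper's experience suggests the Lyapunov-coefficient route is the one that actually delivers the stated counts.
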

The above lower bounds for $N_h$ are the best known in the literature. 

We summarize the results of our investigation in Table~\ref{tab:toriinvestigation}, which presents the highest number of limit tori detected for each case. Here, $m$ denotes the degree of the vector field, and $n$ refers to the Andreev number associated with the origin (with $n=1$ corresponding to the Hopf-zero case). All entries in Table~\ref{tab:toriinvestigation} from the second row onward (i.e., for $m \geq 3$) were obtained through a first-order analysis using the averaging method. In contrast, the improvement for $m = 2$ was achieved via second-order averaging, yielding the new lower bound for quadratic systems $N_h(2) \geqslant 3$.

\begin{table}[ht]
\begin{center}
\begin{tabular}{|c|c|c|c|}
\hline
\diagbox{$m$}{$n$} & 1 & 2 & 3\\
\hline
    2  & 3 & - & -\\
\hline
    3  & 3 & 5 & -\\
\hline
    4  & 6 & 7 & -\\
\hline
    5  & 9 & 11 & 13\\
\hline
    \end{tabular}
    \smallskip
    \caption{Number of limit tori obtained for vector fields \eqref{eq:01} with degree $m$ and Andreev number $n$. }
    \label{tab:toriinvestigation}
\end{center}
\end{table}

Notably, and in contrast to the planar case, the best lower bounds for $N_h(m)$ were obtained from nilpotent-zero singularities. This is a consequence of the averaging method, which, broadly speaking, reduces the problem of identifying limit tori in the original system to that of finding limit cycles in the averaged system. Importantly, even when the original system has a nilpotent-zero singularity, the corresponding averaged system still features Hopf-type equilibria, enabling the application of bifurcation techniques to detect limit cycles, and thus limit tori, in this setting.

Moreover, it was shown in \cite{QDStrictIncrease} that the function $N_h(m)$ is strictly increasing. In view of this result, and taking into account the relationship given in \eqref{CLmeth}, the lower bounds for $N(m)$ established in Theorem~\ref{thm:main} can be extended to higher degrees, as presented in Table~\ref{higherdegrees}. These improved lower bounds surpass those previously obtained in the literature \cite{Candido_2020Rossler,CandidoVallsVDPD,NP25}. 

\begin{table}[ht]\label{higherdegrees}
\begin{center}
\begin{tabular}{ |c|c|c|c|c|c|c|c|c|c|c|c|c|c|c|c|c| } 
 \hline
 $m$ & 2 & 3 & 4 & 5 & 6 & 7 & 8 & 9 & 10 & 11 & 12 & 13 & 14 & 15 & 16&$\cdots$ \\
 \hline
  old $\tau_m$ & 1 & 2 & 2 & 2 & 8 & 8 & 16 & 16 & 28 & 28 & 37 & 37 & 64 & 64 & 64&$\cdots$\\
 \hline
 new $\tau_m$ & 3 & 5 & 7 & 13 & 24 & 25 & 40 & 41 & 56 & 57 & 104 & 105 & 192 & 193 & 200&$\cdots$\\ 
 \hline
\end{tabular}
\smallskip
\caption{Lower bound $\tau_m$ for the number of normally hyperbolic limit tori in three-dimensional polynomial vector fields of degree $m$. The old lower bounds are taken from \cite{Candido_2020Rossler, CandidoVallsVDPD, NP25}.}
\end{center}
\end{table}

\subsection{Structure of the paper}

This paper is structured as follows:

Section~\ref{Sec:Prelim} reviews key results from bifurcation theory that serve as the foundation for our analysis. In particular, Subsection~\ref{sec:averaging} introduces the fundamental concepts of averaging theory and presents recent results concerning the bifurcation of limit tori. Subsection~\ref{sec:degHopf} focuses on the degenerate Hopf bifurcation and provides useful results related to the computation of Lyapunov quantities.

Section~\ref{Sec:method} develops a general first-order analysis for systems of the form~\eqref{eq:01}, deriving the corresponding guiding system \eqref{introgs} for every degree $m\geq 2$ and Andreev numbers $n \geqslant 1$ satisfying $2n-1\leq m$.

Section~\ref{sec:proof} is dedicated to deriving the results presented in Table~\ref{tab:toriinvestigation} and, consequently, to proving Theorem~\ref{thm:main}. The analysis is divided into several subsections. Subsections~\ref{subsec:m3}, \ref{subsec:m4}, and~\ref{subsec:m5} address the cases $m = 3, 4, 5$, respectively, using the first-order analysis developed in Section~\ref{Sec:method}. Then, Subsection~\ref{Sec:N2>3} employs a second-order analysis to improve the lower bound for the case $m = 2$, which cannot be achieved through first-order analysis alone. The proof of Theorem~\ref{thm:main} follows from Theorems~\ref{N(3)=5}, \ref{N(4)=7}, \ref{N(5)=13}, and~\ref{N(2)=3}.

\section{Preliminary results}\label{Sec:Prelim}

\subsection{Averaging theory}\label{sec:averaging}
In this section, we briefly discuss the averaging method which will be the main tool of the present investigation. For a detailed exposition on this method, we refer the reader to \cite{Pereira23Mechanism}. 

We consider systems of non-autonomous $T$-periodic differential equations, given in the standard form
\begin{equation}\label{eq:Averaging}
\dot{\mathbf{x}}=\sum_{i=1}^{N}\varepsilon^iF_i(t,\mathbf{x};\mu)+\varepsilon^{N+1}\tilde{F}(t,\mathbf{x};\mu,\varepsilon),\quad (t,\mathbf{x};\mu,\varepsilon)\in\mathbb{R}\times D\times\Lambda\times [0,\varepsilon_0],
\end{equation}
where $D$ is a bounded subset of $\mathbb{R}^2$, $\Lambda$ is an open subset of $\mathbb{R}^n$, $\varepsilon_0>0$, $\tilde{F}:\mathbb{R}\times D\times\Lambda\times [0,\varepsilon_0]\to\mathbb{R}^2$ and each $F_i:\mathbb{R}\times D\times\Lambda\to\mathbb{R}^2$, $i\in\{1,\dots,N\}$ is a $C^r$ function, $r>1$ and $T$-periodic in the variable $t$.
The averaging theory provides asymptotic estimates for solutions of differential equations given in the standard form \eqref{eq:Averaging} which are obtained via the \emph{averaged functions} $\mathbf{g}_i:D\times\Lambda\to\mathbb{R}^2$, $i\in\{1,\dots,N\}$. 
These functions have a strong connection to the \emph{Melnikov functions}, which was explored in detail in \cite{DouglasEstrobo}. The Melnikov functions $\mathbf{f}_i$ are given by the coefficients of the power series expansion about $\varepsilon=0$ of the $T$-map of \eqref{eq:Averaging} and can be explicitly computed by the recursive formula
$$\mathbf{f}_i(\mathbf{z};\mu)=\frac{y_i(T,\mathbf{z};\mu)}{i!},$$
where 
\begin{eqnarray}
y_1(t,\mathbf{z};\mu)&=&\int_{0}^{t}F_1(s,\mathbf{z};\mu)ds,\nonumber\\
y_i(t,\mathbf{z};\mu)&=&\int_{0}^{t}\left(i!F_i(s,\mathbf{z};\mu)+\sum_{j=1}^{i-1}\sum_{m=1}^{j}\frac{i!}{j!}\partial_x^{m}F_{i-j}(s,\mathbf{z})B_{j,m}(y_1,\dots,y_{j-m+1})(s,\mathbf{z};\mu)\right)ds,\nonumber
\end{eqnarray}
for $i\in\{2,\dots,N\}$, $\mathbf{z}\in D$ and $B_{j,m}$ being the \emph{partial Bell polynomials}, given by 
$$B_{p,q}(x_1,\dots,x_{p-q+1})=\sum\frac{p!}{b_1!b_2!\cdots b_{p-q+1}!}\prod_{j=1}^{p-q+1}\left(\frac{x_j}{j!}\right)^{b_j},$$
where the sum is taken over all the $(p-q+1)$-tuples of nonnegative integers $(b_1,\dots,b_{p-q+1})$ satisfying $b_1+2b_2+\dots+(p-q+1)b_{p-q+1}=p$ and $b_1+b_2+\dots+b_{p-q+1}=q$.

Elementary formal computations give the following expression for the first two Melnikov functions.
\begin{eqnarray}\label{eq:averageformulas}
\mathbf{f}_1(\mathbf{z};\mu)&=&\int_{0}^{T}F_1(t,\mathbf{z};\mu)dt,\nonumber\\
\mathbf{f}_2(\mathbf{z};\mu)&=&\int_{0}^{T}\left(F_2(t,\mathbf{z};\mu)+\partial_xF_1(t,\mathbf{z};\mu)y_1(t,\mathbf{z};\mu)\right)dt=\nonumber\\
&=&\int_{0}^{T}\left(F_2(t,\mathbf{z};\mu)+\partial_xF_1(t,\mathbf{z};\mu)\int_{0}^{t}F_1(s,\mathbf{z};\mu)ds\right)dt.
\end{eqnarray}

The following result establishes the relationship between the averaged functions and the Melnikov functions.

\begin{propo}[\cite{DouglasEstrobo}, Corollary A]\label{propo:averagedfunctions} Let $\ell\in\{2,\dots,N\}$. If either $\mathbf{f}_1=\cdots=\mathbf{f}_{\ell-1}=0$ or $\mathbf{g}_1=\cdots=\mathbf{g}_{\ell-1}=0$, then $\mathbf{f}_i=T\mathbf{g}_i$ for $i\in\{1,\dots,\ell\}$.
\end{propo}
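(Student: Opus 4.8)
The plan is to compare the two families of functions through the time-$T$ displacement maps of the original system \eqref{eq:Averaging} and of its averaged autonomous normal form. Write $\mathbf{x}(t,\mathbf{z};\mu,\varepsilon)$ for the solution of \eqref{eq:Averaging} with $\mathbf{x}(0,\mathbf{z};\mu,\varepsilon)=\mathbf{z}$. By construction the Melnikov functions are exactly the coefficients of the $\varepsilon$-expansion of the displacement, so that $\mathbf{x}(T,\mathbf{z};\mu,\varepsilon)-\mathbf{z}=\sum_{i\geq1}\varepsilon^i\mathbf{f}_i(\mathbf{z};\mu)+O(\varepsilon^{N+1})$. The base identity is immediate from \eqref{eq:averageformulas}: since $\mathbf{f}_1=\int_0^T F_1\,dt$ while the first averaged function is $\mathbf{g}_1=\tfrac1T\int_0^T F_1\,dt$ (consistent with the guiding system \eqref{introgs}), one has $\mathbf{f}_1=T\mathbf{g}_1$ with no hypothesis at all. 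The whole proposition will then follow from the single technical statement that I will call the \emph{core lemma}: if $\mathbf{g}_1=\cdots=\mathbf{g}_{k-1}=0$, then $\mathbf{f}_1=\cdots=\mathbf{f}_{k-1}=0$ and $\mathbf{f}_k=T\mathbf{g}_k$.

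To prove the core lemma I would pass to the averaged normal form. There is a $T$-periodic near-identity change of variables $\mathbf{x}=\Phi_\varepsilon(t,\mathbf{y})=\mathbf{y}+\sum_{i=1}^{N}\varepsilon^i\mathbf{u}_i(t,\mathbf{y};\mu)$, with each $\mathbf{u}_i$ $T$-periodic in $t$, conjugating \eqref{eq:Averaging} to the autonomous system $\dot{\mathbf{y}}=\sum_{i\geq1}\varepsilon^i\mathbf{g}_i(\mathbf{y};\mu)+O(\varepsilon^{N+1})$, whose right-hand side defines the averaged functions. Because the $\mathbf{u}_i$ are $T$-periodic, evaluating the conjugacy at $t=0$ and $t=T$ uses the same map, whence $\mathbf{x}(T)-\mathbf{z}=\Phi_\varepsilon(0,\mathbf{y}(T))-\Phi_\varepsilon(0,\mathbf{y}(0))=D_{\mathbf{y}}\Phi_\varepsilon(0,\mathbf{y}(0))\bigl(\mathbf{y}(T)-\mathbf{y}(0)\bigr)+O\bigl(|\mathbf{y}(T)-\mathbf{y}(0)|^2\bigr)$. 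Under the hypothesis $\mathbf{g}_1=\cdots=\mathbf{g}_{k-1}=0$ the autonomous field is $O(\varepsilon^k)$, so the autonomous displacement satisfies $\mathbf{y}(T)-\mathbf{y}(0)=\varepsilon^k T\,\mathbf{g}_k(\mathbf{y}(0))+O(\varepsilon^{k+1})$. Since $D_{\mathbf{y}}\Phi_\varepsilon(0,\cdot)=\mathrm{Id}+O(\varepsilon)$, $\mathbf{y}(0)=\mathbf{z}+O(\varepsilon)$, and the quadratic remainder is $O(\varepsilon^{2k})$, tracking orders gives $\mathbf{x}(T)-\mathbf{z}=\varepsilon^k T\,\mathbf{g}_k(\mathbf{z})+O(\varepsilon^{k+1})$, i.e. $\mathbf{f}_i=0$ for $i<k$ and $\mathbf{f}_k=T\mathbf{g}_k$. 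The point to verify carefully is exactly this order count: the contributions of the transformation terms $\mathbf{u}_j$ enter only beyond order $k$, so they do not contaminate the clean identity at order $k$.

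Granting the core lemma, the proposition follows from two short arguments. Under the hypothesis $\mathbf{g}_1=\cdots=\mathbf{g}_{\ell-1}=0$ I would apply the core lemma with $k=\ell$, obtaining $\mathbf{f}_i=0=T\mathbf{g}_i$ for $i<\ell$ and $\mathbf{f}_\ell=T\mathbf{g}_\ell$, which is the desired conclusion for all $i\in\{1,\dots,\ell\}$. Under the hypothesis $\mathbf{f}_1=\cdots=\mathbf{f}_{\ell-1}=0$ I would instead show by induction that the averaged functions vanish as well, thereby reducing to the previous case: the core lemma with $k=1$ gives $\mathbf{f}_1=T\mathbf{g}_1$, so $\mathbf{g}_1=0$; and if $\mathbf{g}_1=\cdots=\mathbf{g}_{k-1}=0$ for some $k\leq\ell-1$, the core lemma yields $\mathbf{f}_k=T\mathbf{g}_k$, whence $\mathbf{f}_k=0$ forces $\mathbf{g}_k=0$. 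Iterating up to $k=\ell-1$ recovers the first case and completes the proof.

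I expect the main obstacle to be precisely the asymptotic bookkeeping inside the core lemma, namely establishing that the variation of the autonomous flow over $[0,T]$ and the periodic near-identity conjugacy only perturb the displacement at orders strictly above $k$, so that the leading coefficient is exactly $T\mathbf{g}_k$. This combinatorics mirrors the Bell-polynomial structure appearing in the recursive definition of the $y_i$, and it is the part carried out in full in \cite{DouglasEstrobo}; here it suffices to invoke it in the degenerate situations where all lower-order terms vanish, which is what makes the two reductions above so transparent.
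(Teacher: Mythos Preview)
The paper does not prove this proposition: it is stated as a direct citation of Corollary~A from \cite{DouglasEstrobo}, with no accompanying argument. There is therefore no proof in the paper to compare your proposal against.

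That said, your sketch is the standard route to this result and matches the strategy of the cited reference: pass to the averaged normal form via a $T$-periodic near-identity change of variables, use that the autonomous displacement over one period is $\varepsilon^k T\mathbf{g}_k+O(\varepsilon^{k+1})$ when the lower $\mathbf{g}_i$ vanish, and then track orders through the conjugacy to identify the Melnikov coefficients. Your reduction of the $\mathbf{f}$-hypothesis to the $\mathbf{g}$-hypothesis by the short induction is also correct. The only place where real work hides is, as you note, the order bookkeeping in the core lemma; this is precisely what the Bell-polynomial machinery in \cite{DouglasEstrobo} handles, and invoking it here is appropriate since the paper itself defers the proof entirely to that reference.
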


Now, let $\ell\in\{1,\dots,N\}$ be the index of the first non-vanishing Melnikov function (which by the above result is also the index of the first non-vanishing averaged function) and consider the so-called \emph{guiding system}
\begin{equation}\label{eq:guiding}
\dot{\mathbf{z}}=\mathbf{g}_\ell(\mathbf{z};\mu)=\frac{1}{T}\mathbf{f}_\ell(\mathbf{z};\mu).
\end{equation}

The next two results illustrate the core of the averaging method, which is the fact that the existence of invariant objects of the guiding system \eqref{eq:guiding} imply in the existence of invariant objects in the phase space of system \eqref{eq:Averaging}.

\begin{teo}[{\cite[Theorem A]{LNT2014}}]\label{Teo:classic}
Consider the differential equation \eqref{eq:Averaging}. Suppose that for some $\ell\in\{1,\cdots,N\}$, $\mathbf{f}_1=\cdots=\mathbf{f}_{\ell-1}=0$, $\mathbf{f}_\ell\neq 0$, and that the guiding system \eqref{eq:guiding} has a simple singularity $\mathbf{z}^\ast$. Then, for $\varepsilon>0$ sufficiently small, the differential equation \eqref{eq:Averaging} has an isolated $T$-periodic solution converging to $\mathbf{z}^\ast$ as $\varepsilon\to 0$.
\end{teo}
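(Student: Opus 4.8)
The plan is to prove Theorem~\ref{Teo:classic} by reducing the detection of $T$-periodic solutions of the non-autonomous system \eqref{eq:Averaging} to a fixed-point (or zero-finding) problem for a displacement map, and then applying the implicit function theorem. First I would introduce the Poincar\'e time-$T$ map associated with \eqref{eq:Averaging}: for each initial condition $\mathbf{z}\in D$ and each $\varepsilon\in[0,\varepsilon_0]$, let $\mathbf{x}(t,\mathbf{z};\mu,\varepsilon)$ denote the solution with $\mathbf{x}(0,\mathbf{z};\mu,\varepsilon)=\mathbf{z}$, and define the displacement function
\begin{equation*}
\mathbf{d}(\mathbf{z};\mu,\varepsilon)=\mathbf{x}(T,\mathbf{z};\mu,\varepsilon)-\mathbf{z}.
\end{equation*}
A zero of $\mathbf{d}(\cdot;\mu,\varepsilon)$ corresponds precisely to a $T$-periodic solution of \eqref{eq:Averaging}. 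Since the right-hand side of \eqref{eq:Averaging} vanishes at $\varepsilon=0$, we have $\mathbf{x}(t,\mathbf{z};\mu,0)=\mathbf{z}$ and hence $\mathbf{d}(\mathbf{z};\mu,0)\equiv 0$, so the displacement map is divisible by $\varepsilon$. The key is to track the lowest-order $\varepsilon$-coefficient that survives.

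The second step is to expand $\mathbf{d}$ as a power series in $\varepsilon$ and identify its coefficients with the Melnikov functions. By the recursive construction of the $y_i$ and the definition $\mathbf{f}_i(\mathbf{z};\mu)=y_i(T,\mathbf{z};\mu)/i!$ given in the excerpt, the Taylor expansion of the $T$-map reads
\begin{equation*}
\mathbf{x}(T,\mathbf{z};\mu,\varepsilon)=\mathbf{z}+\sum_{i=1}^{N}\varepsilon^i\,\mathbf{f}_i(\mathbf{z};\mu)+\mathcal{O}(\varepsilon^{N+1}),
\end{equation*}
so that $\mathbf{d}(\mathbf{z};\mu,\varepsilon)=\sum_{i=1}^N\varepsilon^i\mathbf{f}_i(\mathbf{z};\mu)+\mathcal{O}(\varepsilon^{N+1})$. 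Under the hypothesis $\mathbf{f}_1=\cdots=\mathbf{f}_{\ell-1}=0$, every term below order $\ell$ drops out, and we may factor
\begin{equation*}
\mathbf{d}(\mathbf{z};\mu,\varepsilon)=\varepsilon^{\ell}\bigl(\mathbf{f}_{\ell}(\mathbf{z};\mu)+\varepsilon\,\mathbf{r}(\mathbf{z};\mu,\varepsilon)\bigr),
\end{equation*}
where $\mathbf{r}$ is $C^{r-1}$ and bounded on compact sets. For $\varepsilon>0$ the zeros of $\mathbf{d}$ coincide with the zeros of the \emph{reduced displacement} $\mathbf{D}(\mathbf{z};\mu,\varepsilon):=\mathbf{f}_{\ell}(\mathbf{z};\mu)+\varepsilon\,\mathbf{r}(\mathbf{z};\mu,\varepsilon)$, which extends continuously (indeed $C^{r-1}$) to $\varepsilon=0$ with $\mathbf{D}(\mathbf{z};\mu,0)=\mathbf{f}_{\ell}(\mathbf{z};\mu)=T\,\mathbf{g}_{\ell}(\mathbf{z};\mu)$ by Proposition~\ref{propo:averagedfunctions}.

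The final step is the application of the implicit function theorem. By hypothesis $\mathbf{z}^{\ast}$ is a \emph{simple} singularity of the guiding system \eqref{eq:guiding}, meaning $\mathbf{g}_{\ell}(\mathbf{z}^{\ast};\mu)=0$ and $\det D_{\mathbf{z}}\mathbf{g}_{\ell}(\mathbf{z}^{\ast};\mu)\neq 0$; equivalently $\mathbf{D}(\mathbf{z}^{\ast};\mu,0)=0$ and $D_{\mathbf{z}}\mathbf{D}(\mathbf{z}^{\ast};\mu,0)=T\,D_{\mathbf{z}}\mathbf{g}_{\ell}(\mathbf{z}^{\ast};\mu)$ is invertible. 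The implicit function theorem then yields, for $\varepsilon>0$ sufficiently small, a unique branch $\mathbf{z}(\varepsilon)$ of zeros of $\mathbf{D}$ with $\mathbf{z}(\varepsilon)\to\mathbf{z}^{\ast}$ as $\varepsilon\to 0$; local uniqueness guarantees the corresponding periodic solution is isolated. I expect the main obstacle to be the rigorous justification that the analytic/Melnikov coefficients of the $T$-map are exactly the $\mathbf{f}_i$ furnished by the Bell-polynomial recursion, i.e. the smooth dependence of solutions on $(\mathbf{z},\varepsilon)$ together with the combinatorial bookkeeping that matches the higher-order variational equations to the stated formulas; once the expansion is established, the implicit function theorem argument is standard. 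Much of this identification, however, is exactly the content already invoked through Proposition~\ref{propo:averagedfunctions} and the cited references, so in practice I would quote those results and concentrate the proof on the factorization and the final nondegeneracy argument.
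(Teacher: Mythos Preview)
The paper does not actually prove this statement: Theorem~\ref{Teo:classic} is quoted verbatim from \cite[Theorem~A]{LNT2014} as a preliminary result, with no proof supplied. Your sketch is correct and follows the standard route (Poincar\'e displacement map, Melnikov expansion, factorization by $\varepsilon^\ell$, implicit function theorem at the simple zero), which is essentially the argument one finds in the cited reference; there is nothing to compare against in the present paper.
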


\begin{teo}[{\cite[Theorem A]{Pereira23Mechanism}}]\label{Teo:InvariantTori}
Consider the differential equation \eqref{eq:Averaging}. Suppose that for some $\ell\in\{1,\dots,N\}$, $\mathbf{f}_1=\cdots=\mathbf{f}_{\ell-1}=0$, $\mathbf{f}_\ell\neq 0$ and that the guiding system \eqref{eq:guiding} has a hyperbolic attracting (resp. repelling) limit cycle $\gamma$. Then, for each $\varepsilon>0$ sufficiently small, the differential equation \eqref{eq:Averaging} has a $T$-periodic solution $\gamma_{int}$ and a normally hyperbolic attracting (resp. repelling) invariant torus in the extended phase space. In addition, the torus surrounds the periodic solution $\gamma_{int}$ and converges to $\gamma\times\mathbb{S}^1$ as $\varepsilon\to 0$.
\end{teo}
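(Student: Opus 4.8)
The plan is to reduce the assertion to the persistence of a normally hyperbolic invariant circle for the time-$T$ Poincaré map of the suspended flow. First I would pass to the extended phase space $\mathbb{R}^2\times\mathbb{S}^1$, with $\mathbb{S}^1=\mathbb{R}/T\mathbb{Z}$ recording the time variable, and study the Poincaré map $P_\varepsilon$ on the section $\{t=0\}$. Since $P_\varepsilon(\mathbf{z})$ is exactly the solution $\mathbf{x}(T,\mathbf{z};\mu,\varepsilon)$ and $\mathbf{x}(T,\mathbf{z};\mu,0)=\mathbf{z}$, the Melnikov expansion recalled above together with the hypothesis $\mathbf{f}_1=\cdots=\mathbf{f}_{\ell-1}=0$ yields
$$P_\varepsilon(\mathbf{z})=\mathbf{z}+\varepsilon^\ell\mathbf{f}_\ell(\mathbf{z};\mu)+O(\varepsilon^{\ell+1})=\mathbf{z}+\varepsilon^\ell T\,\mathbf{g}_\ell(\mathbf{z};\mu)+O(\varepsilon^{\ell+1}),$$
where the last identity uses Proposition~\ref{propo:averagedfunctions}. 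Comparing with the time-$(\varepsilon^\ell T)$ flow map $\Phi^{\varepsilon^\ell T}$ of the guiding system~\eqref{eq:guiding}, which satisfies $\Phi^{\varepsilon^\ell T}(\mathbf{z})=\mathbf{z}+\varepsilon^\ell T\,\mathbf{g}_\ell(\mathbf{z};\mu)+O(\varepsilon^{2\ell})$, I conclude that on a fixed neighborhood of $\gamma$ the map $P_\varepsilon$ is an $O(\varepsilon^{\ell+1})$-perturbation of $\Phi^{\varepsilon^\ell T}$ in the $C^1$ topology.

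Second, I would record the normal hyperbolicity available at leading order. The hyperbolic attracting limit cycle $\gamma$ is a normally hyperbolic invariant circle for the guiding flow, hence also for the near-identity map $\Phi^{\varepsilon^\ell T}$. In Fenichel-type coordinates $(\varphi,n)$ adapted to $\gamma$, the tangential (flow) direction is neutral, while the transverse linearized dynamics of $\Phi^{\varepsilon^\ell T}$ contracts at a rate bounded above by $1-c\varepsilon^\ell$ for some constant $c>0$ proportional to $|\lambda|$, where $\lambda<0$ is the nontrivial characteristic exponent of $\gamma$ (the product of transverse multipliers over one revolution being $e^{\lambda T_\gamma}<1$). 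Thus the invariant circle of $\Phi^{\varepsilon^\ell T}$ has a normal hyperbolicity gap of order $\varepsilon^\ell$, whereas the perturbation $P_\varepsilon-\Phi^{\varepsilon^\ell T}$ is of order $\varepsilon^{\ell+1}$ in $C^1$, that is, smaller than the gap by a factor $\varepsilon$. This quantitative balance is the heart of the matter.

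Third, I would invoke persistence of normally hyperbolic invariant manifolds to obtain, for all sufficiently small $\varepsilon>0$, an invariant circle $\gamma_\varepsilon$ of $P_\varepsilon$ that is $C^1$-close to $\gamma$ and inherits its transverse attraction. Here lies the main obstacle: the hyperbolicity constants degenerate as $\varepsilon\to0$, so the classical Hirsch--Pugh--Shub/Fenichel statements, which fix the manifold and allow perturbations small relative to a fixed hyperbolicity, do not apply verbatim. To resolve this I would either (i) run a quantitative graph-transform/Lyapunov--Perron argument, seeking the torus as a graph $n=h(\varphi,t)$ over $\gamma\times\mathbb{S}^1$ and verifying that the associated operator is a contraction with constant bounded away from $1$ uniformly in $\varepsilon$ — which holds precisely because the $O(\varepsilon^{\ell+1})$ perturbation is dominated by the $O(\varepsilon^\ell)$ normal contraction; or (ii) rescale time so that the normal hyperbolicity becomes $O(1)$ and apply Fenichel's theorem to the rescaled extended vector field. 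I expect route (i) to be the most transparent, since the ratio perturbation/gap $=O(\varepsilon)$ directly controls the contraction constant.

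Finally, I would translate back to the extended phase space and extract the remaining conclusions. Suspending $\gamma_\varepsilon$ under the flow produces a $2$-torus $\mathcal{T}_\varepsilon\subset\mathbb{R}^2\times\mathbb{S}^1$ whose two tangential directions (the flow direction and the direction along $\gamma_\varepsilon$) are neutral and whose $1$-dimensional normal direction contracts; hence $\mathcal{T}_\varepsilon$ is a normally hyperbolic attracting invariant torus, and $\mathcal{T}_\varepsilon\to\gamma\times\mathbb{S}^1$ as $\varepsilon\to0$ follows from $\gamma_\varepsilon\to\gamma$ in $C^1$. Moreover, since $P_\varepsilon$ is an orientation-preserving diffeomorphism fixing the Jordan curve $\gamma_\varepsilon$ setwise, it maps the closed disk bounded by $\gamma_\varepsilon$ onto itself, and Brouwer's fixed point theorem yields a fixed point — equivalently a $T$-periodic solution $\gamma_{int}$ — surrounded by $\mathcal{T}_\varepsilon$; alternatively this periodic solution can be produced from the singularity of~\eqref{eq:guiding} enclosed by $\gamma$ via Theorem~\ref{Teo:classic}. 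The repelling case follows by reversing the direction of time.
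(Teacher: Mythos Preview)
The paper does not prove this theorem: it is quoted verbatim as a preliminary result from \cite{Pereira23Mechanism} (Theorem~A there) and is used only as a black box in Section~\ref{sec:proof}. Consequently there is no ``paper's own proof'' to compare your proposal against.

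That said, your sketch is a reasonable outline of how such a result is established and, in broad strokes, matches the strategy in the cited reference: reduce to the time-$T$ map, recognize it as an $O(\varepsilon^{\ell+1})$ perturbation of the time-$(\varepsilon^\ell T)$ flow map of the guiding system, and exploit that the perturbation is one order smaller than the (degenerating) normal-hyperbolicity gap of $\gamma$. You correctly flag the central difficulty --- that the hyperbolicity constants go to zero with $\varepsilon$, so Fenichel/HPS cannot be invoked off the shelf --- and both cures you propose (a quantitative graph-transform controlling the contraction constant by the ratio $O(\varepsilon^{\ell+1})/O(\varepsilon^\ell)=O(\varepsilon)$, or a time rescaling that renders the hyperbolicity $O(1)$) are the standard ways around it. One minor caveat: your alternative route to the interior periodic solution via Theorem~\ref{Teo:classic} requires a \emph{simple} singularity of the guiding system inside $\gamma$, which the hypotheses do not guarantee; the Brouwer argument you give first is the robust one.
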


Thus, the problem of finding limit tori in the extended phase space of system~\eqref{eq:Averaging} reduces to the search for limit cycles in the phase space of the associated guiding system, which, in our case, will be a planar differential equation. In this context, we recall some fundamental results concerning the unfolding of limit cycles from a Hopf equilibrium in planar systems.

\subsection{Degenearate Hopf Bifurcation}\label{sec:degHopf}

Consider the following family of planar system of differential equations:
\begin{equation}\label{eq:traceHopf}
\begin{array}{l}
\dot{x}=\tau x-\omega y+P(x,y;\mu),\\
\dot{y}=\omega x+\tau y+Q(x,y;\mu),
\end{array}
\end{equation}
where $P,Q$ are polynomials with no constant or linear terms and $\mu\in\Lambda\subset\mathbb{R}^M$, $\tau\in\mathbb{R}$ and $\omega\neq 0$. The origin is a monodromic equilibrium for the above system and for $\tau=0$, we say that it is a \emph{Hopf} equilibrium. This type of equilibrium has been extensively studied in the literature in the context of the bifurcation of limit cycles (see, for instance, \cite{Bautin54,Christopher05Book,TorreParalell,Zoladek95}). 

The main technique consists in the computation of the \emph{Lyapunov coefficients} $v_{2i+1}(\mu)$ which are the odd indexed coefficients of the Poincaré map associated to the origin and verifying some transversality hypothesis that we will explicitly state on the next results. It is important to remark that, since the obtaining of the Lyapunov coefficients are computationally challenging, one can obtain an alternative set of coefficients $L_{i}(\mu)$  which can be computed through an algebraic recursive algorithm \cite{Amarelin,Romanovski}. It is well-known that these two sets of coefficients are equivalent in the context of the cyclicity and center problems. Thus, we shall henceforth refer to the coefficients $L_{i}$ as Lyapunov coefficients.

The algebraic variety defined by $V^c=\{\mu\in\Lambda:L_{i}(\mu)=0,\forall\; i\in\mathbb{N}\}$ is called the \emph{Bautin variety} and is precisely the set of parameters for which the origin of system \eqref{eq:traceHopf}, with $\tau=0$, is a \emph{center}. For every $\mu\in\Lambda\setminus V^c$, the origin of system \eqref{eq:traceHopf}, with $\tau=0$, is a \emph{weak focus}.

The main results on the bifurcation of limit cycles for system \eqref{eq:traceHopf}, whose proofs can be found on \cite{Yu2005} and \cite{GouveiaTorre21}, are the following.

\begin{teo}[Limit cycle bifurcation from a weak focus]\label{Teo:ChristopherFocus}
 Let $L_1(\mu),\dots, L_k(\mu)$ be the first $k$ Lyapunov coefficients associated to the origin of system \eqref{eq:traceHopf} with $\tau=0$. If there exists $\mu^\ast\in\Lambda$ such that $L_1(\mu^\ast)=\dots=L_{k-1}(\mu^\ast)=0$, $L_{k}(\mu^\ast)\neq 0$ and
$${\rm rank}\left[\dfrac{\partial(L_1,\dots,L_{k-1})}{\partial\mu}\right]_{\mu=\mu^\ast}=k-1,$$
then there exist parameter values $(\tau,\mu)$ near $(0,\mu^\ast)$ for which there are $k$ limit cycles in a neighborhood of the origin of system \eqref{eq:traceHopf}.
\end{teo}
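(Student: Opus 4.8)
The plan is to analyze the Poincaré displacement function associated with the monodromic origin of \eqref{eq:traceHopf} and to show that, under the stated hypotheses, it can be made to possess exactly $k$ simple positive zeros through a suitable choice of the unfolding parameters $(\tau,\mu)$ near $(0,\mu^\ast)$. Each such zero corresponds to a hyperbolic limit cycle bifurcating from the origin.

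First I would recall the structure of the return map. Fixing a transversal half-line through the origin, parametrized by $r>0$, the Poincaré map $P(r;\tau,\mu)$ is analytic, and its displacement $d(r;\tau,\mu)=P(r;\tau,\mu)-r$ admits an expansion of the form
\begin{equation}\label{eq:displacement-expansion}
d(r;\tau,\mu)=\eta_1(\tau)\,r+\sum_{i\geq 1}v_{2i+1}(\mu)\,r^{2i+1}+\text{h.o.t.},
\end{equation}
where $\eta_1(0)=0$ and $\eta_1'(0)\neq 0$, so that the linear coefficient is governed by $\tau$, while the odd-order coefficients $v_{2i+1}$ are the Lyapunov quantities. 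Since the algebraically computable coefficients $L_i$ generate the same ideal as the $v_{2i+1}$ and are related to them by a triangular, invertible change with nonvanishing diagonal, the conditions $L_1(\mu^\ast)=\dots=L_{k-1}(\mu^\ast)=0$ and $L_k(\mu^\ast)\neq 0$ translate into the analogous conditions on $v_3,\dots,v_{2k+1}$, and the rank hypothesis carries over accordingly.

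Next I would exploit the rank condition. Since
$$\text{rank}\left[\frac{\partial(L_1,\dots,L_{k-1})}{\partial\mu}\right]_{\mu=\mu^\ast}=k-1,$$
the map $\mu\mapsto(L_1(\mu),\dots,L_{k-1}(\mu))$ is a submersion at $\mu^\ast$; hence its image covers a full neighborhood of $0\in\mathbb{R}^{k-1}$, so there is a family of parameters along which $(L_1,\dots,L_{k-1})$ may be prescribed independently and arbitrarily near $0$, while $L_k$ remains close to $L_k(\mu^\ast)\neq 0$. Adjoining $\tau$ as an independent parameter controlling $\eta_1$, I obtain $k$ effective parameters $(\tau,L_1,\dots,L_{k-1})$ that can be tuned freely while the leading nonzero coefficient $v_{2k+1}$ keeps a fixed sign.

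The construction of the $k$ cycles then follows the classical degenerate-Hopf scheme. Writing $d(r;\tau,\mu)/r$ and substituting $s=r^2$, the principal part becomes a degree-$k$ polynomial $\eta_1+v_3 s+\dots+v_{2k+1}s^k$ in $s$. Choosing the coefficients $\eta_1,v_3,\dots,v_{2k-1}$ successively small with alternating signs—each of much smaller magnitude than the one created before it, and opposite in sign to $v_{2k+1}$—one produces, by an inductive perturbation argument, $k$ simple positive roots in $s$, hence $k$ simple positive zeros of the displacement near $r=0$. Normal hyperbolicity of each bifurcated cycle follows from simplicity of the corresponding zero, since then $\partial_r d\neq 0$ there. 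I expect the main obstacle to be the careful bookkeeping that transfers the rank and vanishing hypotheses from the algebraic coefficients $L_i$ to the analytic Lyapunov coefficients $v_{2i+1}$ in \eqref{eq:displacement-expansion}, together with the verification that the successive sign choices indeed create one new simple zero at each step without destroying those already produced.
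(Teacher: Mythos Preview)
The paper does not supply its own proof of this theorem: it is stated as a preliminary result with the remark that proofs ``can be found on \cite{Yu2005} and \cite{GouveiaTorre21}''. Your outline is precisely the classical degenerate Hopf argument one finds in those references (and in Christopher's and Roussarie's earlier treatments): expand the displacement map, transfer the rank hypothesis from the algebraic $L_i$ to the analytic $v_{2i+1}$ via their triangular equivalence, and then create $k$ simple zeros by successively perturbing the coefficients with alternating signs and decreasing magnitudes. So there is nothing to compare against, and your sketch is correct.

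One small wording slip: in the inductive step you write that each new coefficient is chosen ``opposite in sign to $v_{2k+1}$'', but what you need is that each newly activated coefficient $v_{2j-1}$ is opposite in sign to the \emph{previous} one $v_{2j+1}$, so that the signs alternate down the chain; this is what produces one additional sign change (hence one new simple root) at each step without destroying the earlier ones. With that corrected, the argument is complete.
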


\begin{teo}[Limit cycle bifurcation from a center]\label{Teo:ChristopherCenter}
Consider system \eqref{eq:traceHopf}. For $\tau=0$, suppose that $\mu^\ast$ is a point in the Bautin variety such that the first $k$ Lyapunov coefficients have independent linear parts (with respect to the expansion of $L_i$ about $\mu^\ast$) and the next $l$ ones have their linear parts as linear combinations of the linear parts of the first $k$ ones. After a change of variables in the parameter space, if necessary, we can write $L_i=u_i$ for $i=1,\dots,k$ and, assuming $L_1=\dots=L_k=0$, the next Lyapunov coefficients write as $L_i=h_i(u)+O(|u|^{m+1})$, for $i=k+1,\dots,k+l$, where $h_i$ are homogeneous polynomials of degree $m\geqslant 2$ and $u=(u_{k+1},\dots,u_{k+l})$. If there exists a line $\eta$ in the parameter space such that for $k<i<k+l$, $h_i(\eta)=0$, the hypersurfaces $h_i=0$ intersect transversally along $\eta$, and $h_{k+l}(\eta)\neq 0$, then there exist parameter values $(\tau,\mu)$ for which system \eqref{eq:traceHopf} has $k+l$ limit cycles.
\end{teo}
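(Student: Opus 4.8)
The plan is to reduce the existence of $k+l$ limit cycles to the existence of $k+l$ simple positive zeros of the displacement (Poincar\'e return minus identity) map of \eqref{eq:traceHopf} at the origin, and then to produce these zeros by a sign--alternation (``peeling'') argument in which the Lyapunov coefficients act as graded, nearly independent controls. First I would record the structure of the displacement function. Writing $\lambda=e^{2\pi\tau/\omega}-1$ for the linear return factor, the Bautin decomposition yields an expansion of the form
\[
d(\rho;\mu,\tau)=\lambda\,\rho+\sum_{i\geq 1}L_i(\mu)\,\rho^{2i+1}\bigl(1+\mathcal{O}(\rho)\bigr),
\]
valid for $\rho$ in a one--sided neighborhood of $0$, in which the $L_i$ generate the Bautin ideal and the term carrying $L_i$ opens at order $\rho^{2i+1}$ (the algebraic coefficients $L_i$ generate the same ideal as the focal values, so the bifurcation analysis transfers). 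A positive zero $\rho^\ast$ of $d$ corresponds to a periodic orbit, and a simple zero yields a hyperbolic limit cycle, so it suffices to arrange $k+l$ transversal sign changes of $\rho\mapsto d(\rho;\mu,\tau)$ on a small interval $(0,\rho_0)$. The guiding heuristic is that if the coefficients $\lambda,L_1,\dots,L_{k+l}$ alternate in sign and their magnitudes decrease sufficiently fast, then a dominant balance between consecutive monomials produces one simple zero between each pair of successive scales, giving exactly $k+l$ positive zeros.

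The substance of the proof is to realize this hierarchy under the stated degeneracy, and here I would split the controls into two blocks. For the \emph{nondegenerate block} $L_1,\dots,L_k$, the independence of the linear parts together with the implicit function theorem allows me, after the normalization $L_i=u_i$, to prescribe arbitrary small values of $L_1,\dots,L_k$ (and of $\lambda$ through $\tau$) by an admissible choice of parameters $(\tau,\mu)$ near $(0,\mu^\ast)$. For the \emph{degenerate block} $L_{k+1},\dots,L_{k+l}$, independent control is unavailable at first order because the linear parts are dependent; here I would pass to the slice $L_1=\cdots=L_k=0$, parametrized by $u=(u_{k+1},\dots,u_{k+l})$, on which $L_{k+i}=h_{k+i}(u)+\mathcal{O}(|u|^{m+1})$. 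Taking $u=s\,e$ plus small transverse corrections, with $e$ a unit vector along the line $\eta$ and $s\to 0^+$, the hypotheses $h_{k+1}(\eta)=\cdots=h_{k+l-1}(\eta)=0$ and $h_{k+l}(\eta)\neq0$ make $L_{k+l}\sim h_{k+l}(e)\,s^m$ a nonzero seed of definite sign, while the transversality of the hypersurfaces $\{h_{k+i}=0\}$ along $\eta$ lets me switch on $L_{k+1},\dots,L_{k+l-1}$ independently by transverse displacements off $\eta$, each of a separately tunable order in $s$.

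The main obstacle, and the step I would treat most carefully, is the reconciliation of these two scalings into a single monotone, sign--alternating chain: the nondegenerate block is governed by linear (first--order) data in the parameters, whereas the degenerate block only becomes controllable at the homogeneous order $m$ through the $h_i$. I would therefore introduce a nested family of scales---$s$ for the seed $L_{k+l}$, successively larger transverse increments for $L_{k+l-1},\dots,L_{k+1}$, and then yet larger, independently prescribed values for $L_k,\dots,L_1$ and finally for $\lambda$---checking that on the slice the leading homogeneous terms dominate the $\mathcal{O}(|u|^{m+1})$ remainders, that the transverse increments move the intended $L_{k+i}$ while perturbing the others negligibly, and that leaving the slice by turning on $L_1,\dots,L_k$ disturbs the degenerate block only at a higher order. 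Once the alternating--sign hierarchy with sufficiently fast--decreasing magnitudes is secured, a Rolle/intermediate--value bookkeeping on $d(\rho;\mu,\tau)$ produces $k+l$ simple positive zeros for parameters $(\tau,\mu)$ arbitrarily close to $(0,\mu^\ast)$, which are the claimed $k+l$ limit cycles of \eqref{eq:traceHopf}.
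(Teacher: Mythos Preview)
The paper does not supply its own proof of this theorem: it is stated as a preliminary result with the remark that ``proofs can be found on \cite{Yu2005} and \cite{GouveiaTorre21}'', so there is no in-paper argument to compare against. Your outline follows exactly the standard route taken in those references: the Bautin decomposition of the displacement map, independent first-order control of $L_1,\dots,L_k$ via the rank hypothesis, and then---on the slice $L_1=\cdots=L_k=0$---using the transversality of the $\{h_i=0\}$ along the line $\eta$ to gain independent control of $L_{k+1},\dots,L_{k+l-1}$ at homogeneous order $m$, with $h_{k+l}(\eta)\neq 0$ providing the nonzero seed; the peeling/sign-alternation then yields $k+l$ simple positive zeros of $d$. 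Your sketch is correct at this level of detail, and the one point you rightly flag as delicate---making the two blocks of scales coexist, so that activating $u_1,\dots,u_k$ (and $\tau$) perturbs the already-arranged $L_{k+1},\dots,L_{k+l}$ only at higher order---is precisely where the cited proofs expend most of their care.
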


The main difficulties of applying the above results when tackling the cyclicity problem is the computation of the Lyapunov coefficients whose expressions become substantially larger as we increase the number of coefficients. As a consequence, finding a suitable parameter value $\mu^\ast$ also becomes increasingly challenging. In this direction, in order to benefit from the whole of the computational power, one can obtain a numerical approximation of such a parameter value and then prove its existence analytically via the Poincaré--Miranda Theorem \cite{Kulpa97} along with the Gerschgorin Circles Theorem \cite{Golub13}.

\begin{teo}[Poincaré--Miranda]\label{Teo:PM} Let $c$ be a positive real number and $S=[-c,c]^n$ a $n$-dimensional cube. Consider $f=(f_1,\dots,f_n)$, $f_i:S\to\mathbb{R}$ a continuous mapping such that $f_i(S_i^+)\cdot f_i(S_i^-)<0$. for each $i\leqslant n$, where $S_i^\pm=\{(x_1,\dots,x_n)\in S:x_i=\pm c\}$. Then, there exists $d\in S$ such that $f(d)=0$.
\end{teo}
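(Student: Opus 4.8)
The plan is to deduce Poincaré--Miranda from the Brouwer fixed point theorem, by converting the sign conditions on the faces of the cube into a self-map of $S$ whose fixed points are forced into the interior and onto the zero set of $f$. First I would normalize the hypotheses: replacing $f_i$ by $-f_i$ leaves the zero set of $f$ unchanged, so I may assume without loss of generality that $f_i>0$ on the face $S_i^+=\{x_i=c\}$ and $f_i<0$ on the opposite face $S_i^-=\{x_i=-c\}$, for every index $i$.

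Next I would introduce the clamp (metric projection onto the interval) $\pi(t)=\max\{-c,\min\{c,t\}\}$ and define a map $G\colon S\to S$ by $G(x)=\bigl(\pi(x_1-f_1(x)),\dots,\pi(x_n-f_n(x))\bigr)$. Since $\pi$ is continuous and each $f_i$ is continuous, $G$ is continuous; and since each coordinate is clamped to $[-c,c]$, one has $G(S)\subseteq S$. As $S=[-c,c]^n$ is compact and convex, I would invoke the Brouwer fixed point theorem to produce a point $x^\ast\in S$ with $G(x^\ast)=x^\ast$.

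The heart of the argument is then a coordinatewise case analysis at $x^\ast$. For a fixed index $i$, I would rule out $x_i^\ast$ lying on a face: if $x_i^\ast=c$, then $x^\ast\in S_i^+$ gives $f_i(x^\ast)>0$, whence $x_i^\ast-f_i(x^\ast)<c$ and the clamp returns a value strictly below $c$, contradicting $\pi(x_i^\ast-f_i(x^\ast))=x_i^\ast=c$; the case $x_i^\ast=-c$ is symmetric, using $f_i<0$ on $S_i^-$. Hence $x_i^\ast\in(-c,c)$, where $\pi$ acts as the identity, so $x_i^\ast-f_i(x^\ast)=x_i^\ast$ and therefore $f_i(x^\ast)=0$. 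As $i$ was arbitrary, $f(x^\ast)=0$, and $d=x^\ast$ is the desired zero.

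The only genuinely deep ingredient here is the Brouwer fixed point theorem; the rest is elementary bookkeeping with the projection and the sign conditions. Thus the main obstacle is not internal to this argument but lies in Brouwer's theorem itself, which I would cite as a standard result rather than reprove (indeed Poincaré--Miranda and Brouwer are equivalent, so no cheaper self-contained route exists). As an alternative I would keep a degree-theoretic proof in reserve: a straight-line homotopy from $f$ to the linear map $x\mapsto(x_1,\dots,x_n)$ respects the sign conditions on $\partial S$, so the topological degree of $f$ relative to $0$ equals $\pm 1$, and its nonvanishing forces a zero of $f$ in the interior of $S$. This variant rests on the same nontrivial topological core.
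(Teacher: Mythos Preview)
Your argument is correct. The paper itself does not supply a proof of this theorem; it merely states Poincar\'e--Miranda as a preliminary tool and cites Kulpa's article for the proof, so there is no in-paper argument to compare against. The Brouwer-based route you take (clamp the displacement $x-f(x)$ back into the cube, apply Brouwer, then rule out fixed points on the faces using the sign conditions) is one of the standard proofs, and the degree-theoretic variant you keep in reserve is the other common one.

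One small phrasing point: when you write ``$x_i^\ast\in(-c,c)$, where $\pi$ acts as the identity, so $x_i^\ast-f_i(x^\ast)=x_i^\ast$,'' the statement you are actually using is that $\pi^{-1}(\{s\})=\{s\}$ for any $s\in(-c,c)$, which forces $x_i^\ast-f_i(x^\ast)=x_i^\ast$. As written it reads as though you already know $x_i^\ast-f_i(x^\ast)\in[-c,c]$ before applying $\pi$, which has not been established. The logic is sound; the sentence could be tightened.
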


\begin{teo}[Gerschgorin Circles Theorem]\label{Teo:Gerschgorin}
Let $A=(a_{i,j})\in\mathbb{C}^{n\times n}$. For each $i\in\{1,\dots,n\}$, consider
$$D_i=\{z\in\mathbb{C}:|z-a_{i,i}|<R\},$$
where $R=\sum_{i\neq j}|a_{i,j}|$. If $\lambda$ is an eigenvalue of $A$, then there exists $i\in\{1,\dots,n\}$ such that $\lambda\in D_i$.
\end{teo}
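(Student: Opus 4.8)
The plan is to locate $\lambda$ directly from the eigenvalue equation, reading off the row of $A$ indexed by the largest component of an associated eigenvector. This is the classical argument and requires nothing beyond the triangle inequality.

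First I would fix an eigenvalue $\lambda$ of $A$ and choose a corresponding eigenvector $v=(v_1,\dots,v_n)^{\top}\neq 0$, so that $Av=\lambda v$ (such a $v$ exists because $\det(A-\lambda I)=0$). Since $v\neq 0$, I would select an index $i$ maximizing the modulus of the coordinates, that is $|v_i|=\max_{1\le j\le n}|v_j|>0$; the strict positivity of $|v_i|$ is precisely what legitimizes the division performed at the end.

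The key step is to extract the $i$-th scalar equation of $Av=\lambda v$, namely $\sum_{j=1}^{n} a_{i,j}v_j=\lambda v_i$, and to isolate the diagonal term, obtaining $(\lambda-a_{i,i})v_i=\sum_{j\neq i}a_{i,j}v_j$. Taking moduli and applying the triangle inequality together with the maximality bound $|v_j|\le |v_i|$ yields
$$|\lambda-a_{i,i}|\,|v_i|=\Big|\sum_{j\neq i}a_{i,j}v_j\Big|\le \sum_{j\neq i}|a_{i,j}|\,|v_j|\le |v_i|\sum_{j\neq i}|a_{i,j}|.$$
Dividing through by $|v_i|>0$ gives $|\lambda-a_{i,i}|\le \sum_{j\neq i}|a_{i,j}|$, which places $\lambda$ in the $i$-th Gerschgorin disk and establishes the statement.

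I do not anticipate any genuine obstacle, as the argument is elementary and self-contained. The only points deserving care are the existence of a nonzero eigenvector and the interpretation of the radius: the quantity controlling the $i$-th disk is the deleted row sum $R_i=\sum_{j\neq i}|a_{i,j}|$, which depends on $i$ rather than being a single global constant. Accordingly I would read $D_i$ as the \emph{closed} disk $\{z:|z-a_{i,i}|\le R_i\}$ with this index-dependent radius, since the estimate above produces the non-strict inequality and the closed disk is the natural formulation (the strict/open version fails already for a diagonal matrix, where some $R_i=0$).
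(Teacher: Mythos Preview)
Your argument is the standard, correct proof of Gerschgorin's theorem. Note that the paper does not supply its own proof of this statement: it is quoted as a preliminary tool with a reference to \cite{Golub13}, so there is nothing to compare against. Your remark about reading $D_i$ as the closed disk with the row-dependent radius $R_i=\sum_{j\neq i}|a_{i,j}|$ is well taken; the statement as printed is slightly imprecise on both points, and your proof yields exactly the non-strict inequality $|\lambda-a_{i,i}|\le R_i$.
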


The computer assisted approach was successfully implemented to obtain high lower bounds on the cyclicity of monodromic singularities in \cite{GouveiaGine, GouveiaTorre21}.

\section{First order analysis of monodromic singular points}\label{Sec:method}

We consider the perturbation \eqref{eq:01} of $X_0$, and search for conditions on the parameters $\mu\in\Lambda$ for invariant tori bifurcations to occur. To fix notation, we set $\mu=(a_{jkl},b_{jkl},c_{jkl})$ and 
$$X_1(\mu)=\sum_{j+k+l=0}^{m}a_{jkl} x^j y^k z^l\partial_x+\sum_{j+k+l=0}^{m}b_{jkl} x^j y^k z^l\partial_y+\sum_{j+k+l=0}^{m}c_{jkl} x^j y^k z^l\partial_z.$$

Consider the family of differential systems \eqref{eq:01}. We introduce the change of variables $$x=r{\rm Cs}\theta,\quad y=r^n{\rm Sn}\theta,\quad z=w,$$ where ${\rm Cs\theta}$ and ${\rm Sn\theta}$ are the \emph{generalized trigonometric functions}. More precisely, these are the solutions of the Cauchy problem $u'=-v$, $v'=u^{2n-1}$, $u(0)=1, v(0)=0$. The generalized trigonometric functions are $T$-periodic with period $T=2\sqrt{\frac{\pi}{n}}\frac{\Gamma(1/2n)}{\Gamma((n+1)/2n)}$. Note that for $n=1$, they define the usual cosine and sine functions.

We obtain the differential system:
\begin{small}
\begin{eqnarray}\label{eq:polar}
\dot{r}&=&\frac{\varepsilon}{r^{2n-1}}\left(\sum_{j+k+l=0}^{m}a_{jkl}r^{2n-1+j+nk}{\rm Sn}^k\theta{\rm Cs}^{2n-1+j}\theta w^l+b_{jkl}r^{n+j+nk}{\rm Sn}^{k+1}\theta{\rm Cs}^j\theta w^l\right)+O(\varepsilon^2),\nonumber\\
\dot{\theta}&=&r^{n-1}+\frac{\varepsilon}{r^{n+1}}\left(\sum_{j+k+l=0}^{m}b_{jkl}r^{j+kn+1}{\rm Sn}^{k}\theta{\rm Cs}^{j+1}\theta w^l-na_{jkl}r^{n+j+nk}{\rm Sn}^{k+1}\theta{\rm Cs}^{j}\theta w^l\right)+O(\varepsilon^2),\nonumber\\
\dot{w}&=&\varepsilon\sum_{j+k+l=0}^{m}c_{jkl}r^{j+nk}{\rm Sn}^{k}\theta{\rm Cs}^j\theta w^l+O(\varepsilon^2),
\end{eqnarray}
\end{small}
Notice that $\dot{\theta}$ does not vanish at $\varepsilon=0$ for $r\neq 0$. Thus, setting $\theta$ as the independent variable, we obtain the differential system $$\left(\frac{dr}{d\theta},\frac{dw}{d\theta}\right)=\varepsilon F_1(r,\theta,w;\mu)+\varepsilon^2\tilde{F}(r,\theta,w;\mu,\varepsilon),$$
which is written in the standard form \eqref{eq:Averaging}. Now we are set to apply the averaging method to study the bifurcation of invariant tori in the original system \eqref{eq:01}. We now compute explicitly the first averaged function $\mathbf{g}_1(r,w;\mu)$.
$$\mathbf{g}_1(r,w;\mu)=(\mathbf{g}_1^1(r,w;\mu),\mathbf{g}_1^2(r,w;\mu)),$$
where
\begin{small}
\begin{eqnarray}
\mathbf{g}_1^1(r,w;\mu)&=&\frac{1}{r^{3n-2}}\sum_{j+k+l=0}^{m}\left(a_{jkl}r^{2n-1+j+nk}w^l\int_{0}^{T}{\rm Sn}^k\theta{\rm Cs}^{2n-1+j}\theta d\theta\right.\nonumber\\
&&\left.+b_{jkl}r^{n+j+nk}w^l\int_{0}^{T}{\rm Sn}^{k+1}\theta{\rm Cs}^j\theta d\theta\right),\nonumber\\
\mathbf{g}_1^2(r,w;\mu)&=&\frac{1}{r^{n-1}}\sum_{j+k+l=0}^{m}c_{jkl}r^{j+nk}w^l\int_{0}^{T}{\rm Sn}^{k}\theta{\rm Cs}^j\theta d\theta.\nonumber
\end{eqnarray}    
\end{small}

We now state a useful result regarding the generalized trigonometric functions. See \cite{LyapunovBook} for a proof.
\begin{propo}\label{propo:trig}
For $p,q$ fixed natural numbers, consider the following quantity
$$I(p,q)=\int_{0}^{T}{\rm Sn}^p\theta{\rm Cs}^q\theta d\theta.$$
We have that $I(p,q)=0$ if either $p$ or $q$ are odd and $I(p,q)\neq 0$ when both $p$ and $q$ are even.
\end{propo}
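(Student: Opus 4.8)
The plan is to prove Proposition~\ref{propo:trig} by exploiting the symmetry properties of the generalized trigonometric functions $\mathrm{Cs}\,\theta$ and $\mathrm{Sn}\,\theta$ over a full period, reducing the claim to an elementary parity argument. The key structural fact I would establish first is how these functions behave under the half-period shift $\theta \mapsto \theta + T/2$ and, relatedly, under the reflection that sends $\theta$ to the complementary part of the period. For the classical case $n=1$, one has $\cos(\theta+\pi)=-\cos\theta$ and $\sin(\theta+\pi)=-\sin\theta$, and the vanishing of $\int_0^{2\pi}\sin^p\theta\cos^q\theta\,d\theta$ whenever $p$ or $q$ is odd is immediate from this sign change. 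The strategy is to show the analogous antisymmetry persists for general $n$.

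First I would return to the defining Cauchy problem $u'=-v$, $v'=u^{2n-1}$ with $u(0)=1$, $v(0)=0$, and verify the conserved quantity. Differentiating $\tfrac{1}{2n}u^{2n}+\tfrac12 v^2$ along solutions gives $u^{2n-1}u'+vv' = -u^{2n-1}v + vu^{2n-1}=0$, so the orbit $\{(\mathrm{Cs}\,\theta,\mathrm{Sn}\,\theta)\}$ lies on the closed curve $\tfrac{1}{2n}U^{2n}+\tfrac12 V^2 = \tfrac{1}{2n}$, which is symmetric under both $U\mapsto -U$ and $V\mapsto -V$. This symmetry is the geometric engine of the proof. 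From it I would extract the two reflection symmetries of the generalized trigonometric functions: the vector field $(-v,u^{2n-1})$ is invariant under $(u,v)\mapsto(u,-v)$ combined with time-reversal, and under $(u,v)\mapsto(-u,v)$ combined with the half-period translation. Concretely, I expect to obtain the identities
\begin{equation}\label{eq:symm}
\mathrm{Cs}(\theta+T/2)=-\mathrm{Cs}\,\theta,\qquad \mathrm{Sn}(\theta+T/2)=-\mathrm{Sn}\,\theta,
\end{equation}
exactly as in the classical case, by checking that both sides of each equation solve the same ODE system and match the initial data at $\theta=0$ (using that $(\mathrm{Cs}(T/2),\mathrm{Sn}(T/2))=(-1,0)$, the antipodal point reached after half a period).

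Granting \eqref{eq:symm}, the proof concludes by a direct change of variables in the integral. Splitting $I(p,q)=\int_0^{T/2}+\int_{T/2}^{T}$ and substituting $\theta=\phi+T/2$ in the second integral, the identities give
\[
\int_{T/2}^{T}\mathrm{Sn}^p\theta\,\mathrm{Cs}^q\theta\,d\theta=(-1)^{p+q}\int_0^{T/2}\mathrm{Sn}^p\phi\,\mathrm{Cs}^q\phi\,d\phi,
\]
so $I(p,q)=\bigl(1+(-1)^{p+q}\bigr)\int_0^{T/2}\mathrm{Sn}^p\theta\,\mathrm{Cs}^q\theta\,d\theta$, which already forces $I(p,q)=0$ when $p+q$ is odd. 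To handle the case where exactly one of $p,q$ is odd while the other is even (so $p+q$ is odd and we are done) versus the genuinely needed case where one is odd and the sum is even, I would use the second reflection symmetry separately on each factor: the symmetry coming from $V\mapsto -V$ shows $\mathrm{Sn}$ is odd about a quarter-period and yields vanishing when $p$ alone is odd, and the symmetry from $U\mapsto -U$ handles $q$ odd. The nonvanishing statement for $p,q$ both even follows because the integrand is then strictly positive on a set of full measure, so the integral of a nonnegative not-identically-zero function is strictly positive. The main obstacle I anticipate is establishing the precise reflection symmetries \eqref{eq:symm} and the quarter-period relations rigorously for general $n$, since unlike the classical trigonometric case there are no closed-form addition formulas; the cleanest route is the uniqueness-of-solutions argument for the Cauchy problem rather than any explicit manipulation, and I would lean on that throughout.
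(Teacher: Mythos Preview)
The paper does not supply its own proof of this proposition; it merely cites Lyapunov's monograph. Your symmetry-based strategy is the standard one and is essentially correct, but one detail in the quarter-period step is misstated and worth fixing.

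The time-reversal symmetry $(u,v,\theta)\mapsto(u,-v,-\theta)$ of the Cauchy problem gives $\mathrm{Cs}$ even and $\mathrm{Sn}$ odd about $\theta=0$. Combining this with your half-period identities $\mathrm{Cs}(\theta+T/2)=-\mathrm{Cs}\,\theta$, $\mathrm{Sn}(\theta+T/2)=-\mathrm{Sn}\,\theta$ (which are correct, via uniqueness and the fact that $(-u,-v)$ is again a solution) yields
\[
\mathrm{Cs}(T/2-\theta)=-\mathrm{Cs}\,\theta,\qquad \mathrm{Sn}(T/2-\theta)=\mathrm{Sn}\,\theta,
\]
so $\mathrm{Cs}$ is \emph{odd} and $\mathrm{Sn}$ is \emph{even} about the quarter-period $T/4$ --- the opposite of what you wrote. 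With the correct parities the argument is immediate and you do not even need the half-period splitting: oddness of $\mathrm{Sn}$ about $0$ makes $\mathrm{Sn}^p\mathrm{Cs}^q$ odd on the symmetric interval $[-T/2,T/2]$ whenever $p$ is odd, forcing $I(p,q)=0$; oddness of $\mathrm{Cs}$ about $T/4$ gives, after the substitution $\theta\mapsto T/2-\theta$ on $[0,T/2]$, that $\int_0^{T/2}\mathrm{Sn}^p\mathrm{Cs}^q\,d\theta=(-1)^q\int_0^{T/2}\mathrm{Sn}^p\mathrm{Cs}^q\,d\theta$, so this half-integral (and hence $I(p,q)$) vanishes whenever $q$ is odd. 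Your argument for the nonvanishing when both $p$ and $q$ are even is fine as stated.
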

Using Proposition \ref{propo:trig}, applying the following rescalings in the independent variable and in the parameter space
$$\theta=r^{n-1}\tilde\theta,\quad \tilde{a}_{jkl}=I(k,2n-1+j)\,a_{jkl},\quad \tilde{b}_{jkl}=I(k+1,j)\,b_{jkl},\quad \tilde{c}_{jkl}=I(k,j)\,c_{jkl},$$
and dropping the tildes, yields that the guiding system $(r',w')=\mathbf{g}_1(r,w;\mu)$ falls into the following family:
\begin{eqnarray}\label{eq:guidingformula}
r'&=&\sum_{j+k+l=1}^{m}a_{jkl}r^{j+nk}w^l+r^{1-n}\sum_{j+k+l=1}^{m}b_{jkl}r^{j+nk}w^l,\nonumber\\
w'&=&\sum_{j+k+l=0}^{m}c_{jkl}r^{j+nk}w^l,
\end{eqnarray}
for the following admissible parameters
$$a_{jkl}: j+1\equiv k\equiv 0 \mod 2,\quad b_{jkl}: j\equiv k+1\equiv 0 \mod 2,\quad c_{jkl}: j\equiv k\equiv 0 \mod 2.$$

Once this process is completed, one may search for equilibrium points $(\rho,w_0)$ of \eqref{eq:guidingformula} in the region $r>0$ whose associated eigenvalues are conjugated complex with non zero imaginary part, which means that the Jordan canonical form is given by \eqref{eq:traceHopf}. In practice, it is more effective to assume conditions on the parameters $(a_{jkl},b_{jkl},c_{jkl})$ such that $(\rho,w_0)$ is such a point. More precisely, we introduce the change of parameters:

\begin{equation}
\begin{aligned}
b_{010}&=-a_{100}-\sum_{j+k+l=2}^{m}\biggl(a_{jkl}\rho^{j+nk-1}w_0^l+b_{jkl}\rho^{j+n(k-1)}w_0^l\biggr),\\
c_{000}&=-\sum_{j+k+l=1}^{m}c_{jkl}\rho^{j+nk}w_0^l,
\end{aligned}
\end{equation}
which is well defined for $\rho>0$. Now, by setting
\begin{small}
\begin{equation}
\begin{aligned}
c_{001}&=2\tau-\sum_{j+k+l=2}^{m}\biggl((j+nk-1)a_{jkl}\rho^{j+nk-1}w_0^l+(j+n(k-1))b_{jkl}\rho^{j+n(k-1)}w_0^l+lc_{jkl}\rho^{j+nk}w_0^{l-1}\biggr),
\end{aligned}
\end{equation}
\end{small}
and assuming the condition
\begin{equation}\label{eq:conditionHopf}
d:=\sum_{j+k+l=2}^{m}l(a_{jkl}\rho^{j+nk}+b_{jkl}\rho^{j+n(k-1)+1})w_0^{l-1}\neq 0,
\end{equation}
we can introduce the additional change of parameter given by
\begin{equation}
\begin{aligned}
c_{200}&=(2\rho\, d)^{-1}\biggl(-\omega^2-\tau^2-2d\,n\,c_{020}\rho^{2n-1}-d\sum_{j+k+l=3}^{m}(j+nk)c_{jkl}\rho^{j+nk-1}w_0^l\\
&+\biggl(\sum_{j+k+l=2}^{m}\left[(j+nk-1)a_{jkl}\rho^{j+nk-1}+(j+n(k-1))b_{jkl}\rho^{j+n(k-1)}\right]w_0^l\biggr)\cdot\\
&\quad\quad\cdot\biggl(\sum_{j+k+l=1}^{m}l c_{jkl}\rho^{j+nk}w_0^{l-1}\biggr)\biggr),
\end{aligned}
\end{equation}
to obtain that the eigenvalues of $D\mathbf{g}_1(\rho,w_0)$ are given by $\tau\pm i\omega$.

The next step is to put system \eqref{eq:guidingformula} into its Jordan canonical form \eqref{eq:traceHopf} to investigate the limit cycles that unfold from a degenerate Hopf bifurcation at the singular point $(\rho,w_0)$. To this end, we apply  Theorem \ref{Teo:ChristopherFocus} or Theorem \ref{Teo:ChristopherCenter} to guarantee the existence of parameter values for which limit cycles unfold from $(\rho,w_0)$. For each limit cycle, we imply by Theorem \ref{Teo:InvariantTori}, the existence of a correspondent normally hyperbolic limit torus in the original system \eqref{eq:01}. In the next section we present the new lower bounds on $N_h(m)$ for $m=2,3,4,5$ that we have obtained by applying the method above.

\begin{obs}\label{obs:degreeguiding}
The guiding system \eqref{eq:guidingformula} is polynomial and one can easily determine its degree $d(m,n)$ in terms of the degree $m$ of the original system \eqref{eq:01} and the Andreev number $n$. More precisely, if $m$ is odd $d(m,n)=n(m-1)+1$ and if $m$ is even, $d(m,n)=nm$. Thus, the number of limit tori one can detect via the first ordered averaged system is bounded by $H(d(m,n))$, which suggests that in the search for better lower bounds on $N(m)$, the nilpotent-zero singular points have more potential. For instance, for quintic systems \eqref{eq:01}, the first order averaged function $\mathbf{g}_1$ has also degree $5$, but for $n=2,3$, the corresponding degrees are $9$ and $13$. 
\end{obs}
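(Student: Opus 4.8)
The plan is to treat this remark as a constrained degree count over the admissible index sets, after first confirming that \eqref{eq:guidingformula} is genuinely a polynomial vector field. The only monomials that could a priori introduce negative powers of $r$ are the $b$-terms of $r'$, which carry the prefactor $r^{1-n}$ and hence an exponent $j+nk+1-n$. For an admissible $b$-index the parity constraint forces $k$ to be odd, so $k\geq 1$ and $j+nk+1-n=j+n(k-1)+1\geq 1$. Every exponent is therefore a nonnegative integer, and \eqref{eq:guidingformula} is polynomial in $(r,w)$.

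The next step is to record the total $(r,w)$-degree of each of the three monomial families. Writing $s=j+k+l\leq m$, the $a$- and $c$-monomials $r^{j+nk}w^l$ have degree $(j+nk)+l=s+(n-1)k$, while the $b$-monomials $r^{j+nk+1-n}w^l$ have degree $s+(n-1)(k-1)$. Since $n-1\geq 0$, each of these expressions is nondecreasing in both $s$ and $k$; consequently every maximum is attained on the boundary $s=m$ with $k$ (respectively $k-1$) taken as large as the relevant parity constraint allows.

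The heart of the proof is then a short case analysis on the parity of $m$. When $m$ is odd, the $a$-family (with $j$ odd, $k$ even) attains its maximum at $j=1$, $k=m-1$, $l=0$, giving degree $m+(n-1)(m-1)=n(m-1)+1$, and the $b$-family (with $j$ even, $k$ odd) reaches the same value at $j=0$, $k=m$, $l=0$; the $c$-family, constrained to $k$ even, only reaches $k=m-1$ and hence degree $n(m-1)<n(m-1)+1$. When $m$ is even, the $c$-family (with $j,k$ even) attains $j=0$, $k=m$, $l=0$, giving degree $nm$, whereas both the $a$- and $b$-families are forced by parity down to $k\leq m-2$ and $k\leq m-1$ respectively, yielding at most $n(m-2)+2\leq nm$. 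Taking the overall maximum in each case gives $d(m,n)=n(m-1)+1$ for $m$ odd and $d(m,n)=nm$ for $m$ even. I would also remark that the maximizing monomials carry free parameters in the family \eqref{eq:guidingformula}, so the degree is exactly $d(m,n)$ for generic parameter choices, not merely bounded by it. Finally, the bound on detectable limit tori follows immediately: by Theorem~\ref{Teo:InvariantTori} each hyperbolic limit cycle of the planar polynomial system \eqref{eq:guidingformula}, which has degree $d(m,n)$, produces a normally hyperbolic limit torus of \eqref{eq:01}, and the number of limit cycles of a planar polynomial field of degree $d(m,n)$ is by definition at most $H(d(m,n))$.

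There is no deep obstacle here; the entire content is bookkeeping. The only points requiring genuine care are handling the exponent shift $r^{1-n}$ of the $b$-family correctly and tracking the parity constraints so as to identify the exact maximizing monomial in each family and to confirm that the competing families never exceed the claimed value. The substitution of the illustrative quintic example ($m=5$, giving $d(5,n)=4n+1$, i.e. $5,9,13$ for $n=1,2,3$) then serves as a consistency check against Table~\ref{tab:toriinvestigation}.
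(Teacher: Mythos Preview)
Your argument is correct in structure and conclusion, and in fact the paper offers no proof at all for this remark---it is simply asserted---so you are supplying the details the authors omitted. The parity bookkeeping and the identification of the maximizing monomials are exactly the right way to justify the degree formula, and your check that the $r^{1-n}$ prefactor never produces negative exponents is the essential point for polynomiality.

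There is one small slip. In the case $m$ odd you claim the $c$-family ``only reaches $k=m-1$ and hence degree $n(m-1)$''. In fact, with $s=m$, $j=0$, $k=m-1$ (which is even), and $l=1$, the $c$-monomial $r^{n(m-1)}w$ has total degree $m+(n-1)(m-1)=n(m-1)+1$, the same as the $a$- and $b$-families. So the $c$-family is not strictly dominated; it ties. This does not affect the final value of $d(m,n)$, only your description of which family attains it.
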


It is important to remark that the limit tori detected from the method described above are all nested and surround the same periodic solution of \eqref{eq:01}. This is due to the fact that these limit tori correspond to limit cycles arising from a degenerate Hopf bifurcation at the singular point $(\rho,w_0)$ of the guiding system. Since these limit cycles are nested, so are the limit tori of the original system.

\section{Lower bounds for $N_h(m)$}\label{sec:proof}

In this section we present the new lower bounds on the number of limit tori $N(m)$ for polynomial vector fields \eqref{eq:01} of degree $m$.

\subsection{Quadratic systems}

For quadratic systems \eqref{eq:01}, applying the method described in the previous section, after translating $(\rho,w_0)$ to the origin, we obtain the guiding system \eqref{eq:guidingformula} given by
\begin{equation}
\begin{aligned}
r'&=-\frac{d w_0}{\rho }r+\frac{d}{\rho}r w,\\
w'&=\frac{\rho  \left(\tau ^2+\omega ^2\right)}{2 d}+c_{002} w_{0}^2-2 \tau  w_{0}+(2 \tau -2 c_{002} w_{0})w-\frac{\tau ^2+\omega ^2}{2 d \rho }r^2+c_{002}w^2,
\end{aligned}
\end{equation}
whose Jordan canonical form, after translating $(\rho,w_0)$ for $\tau=0$, is given by
\begin{equation}
\begin{aligned}
x'&=-\omega y+\omega c_{002} x^2-\frac{d}{2\rho}y^2,\\
y'&=\omega x+\frac{d \omega}{\rho}xy,
\end{aligned}
\end{equation}
which has a time-reversible center at the origin. Thus, the equilibrium does not undergo a Hopf bifurcation at $\tau=0$. We conclude that, using only the first order averaging functions, we do not detect any invariant torus for system \eqref{eq:01} and therefore, a higher order analysis is required to obtain a new lower bound for $N(2)$. This is precisely what is done in Section \ref{Sec:N2>3}.

\subsection{Cubic systems}\label{subsec:m3}
For cubic systems \eqref{eq:01}, there are two different forms for $X_0$, namely $n=1,2$, which correspond respectively to the Hopf-zero and the nilpotent-zero equilibria. In this section, we present a comparison between the two cases and show that indeed one can detect more limit tori unfolding from the nilpotent-zero equilibria than the Hopf-zero ones, as discussed in Remark \ref{obs:degreeguiding}. 

\subsubsection*{Hopf-zero equilibrium}

Applying the method described in Section \ref{Sec:method}, after translating the equilibrium $(\rho,w_0)$ to the origin and rescaling the independent variable by $\omega$, the Jordan canonical form of the guiding system \eqref{eq:guidingformula} is given by the following 5-parameter family, for $\tau=0$:
\begin{eqnarray}\label{eq:ZHcubicJordan}
x'&=&-y+\frac{2 \lambda_1^3 \rho}{\omega}x y^2-\frac{\lambda_1^2 \lambda_3 \rho}{\omega }x y^2+\frac{\lambda_1^2 \lambda_4 \rho }{\omega} y^3+2 \lambda_1^2 x^2 y+\frac{2 \lambda_1^2 \rho }{\omega }x y+\lambda_1^2 y^3+\lambda_1 \lambda_2 y^3\nonumber\\
&&-\frac{1}{2} \lambda_1 \lambda_3 x^2 y-\frac{\lambda_1 \lambda_3 \rho }{\omega }x y+\lambda_1 \lambda_4 x y^2-\frac{\lambda_1 \lambda_5 \rho }{\omega }y^3+\frac{\lambda_1 \omega }{2 \rho }x^3 +\lambda_1 x^2+\frac{\lambda_1  \omega }{\rho }x y^2\nonumber\\
&&+\frac{\lambda_2 \omega }{\rho }x y^2+\lambda_2 y^2-\frac{\omega }{\rho }x y ,\nonumber\\
y'&=&x-\lambda_1^2 x y^2+\lambda_1 \lambda_3 x y^2-\frac{\lambda_1 \omega}{2 \rho }x^2 y+\frac{\lambda_3  \omega}{2 \rho
   }x^2y+\lambda_3 x y+\lambda_4 y^2+\lambda_5 y^3+\frac{ \omega }{2 \rho } x^2.
\end{eqnarray}
Hence, any lower bound for the cyclicity of the origin of the family \eqref{eq:ZHcubicJordan} implies by Theorem \ref{Teo:InvariantTori} in a lower bound for the number of limit tori for the original cubic system \eqref{eq:01} with $n=1$. We compute the Lyapunov coefficients $L_i(\mu)$ for $\mu=(\lambda_1,\lambda_2,\lambda_3,\lambda_4,\lambda_5)$, $i=1,\dots,6$ and obtain:
\begin{eqnarray}
L_1(\mu)&=&\frac{\left(2 \lambda_4 \lambda_1 +4 \lambda_2 \lambda_4 -2 \lambda_4 \lambda_3 +6 \lambda_5 \right) \omega +2 \left(2\lambda_1 +\lambda_2\right) \rho  \left(2\lambda_1 -\lambda_3\right) \lambda_1}{3 \omega},\nonumber\\
L_2(\mu)&=&\frac{2}{45 \omega^{3}}\left(10 \lambda_1 +5 \lambda_2 -\lambda_3 \right)L_{2,1}(\mu)L_{2,2}(\mu)\mod\langle L_1\rangle,\nonumber\\
L_3(\mu)&=&\frac{2 \lambda_3}{1575 \omega^{3}} \left(\lambda_2 +\lambda_3 \right) \left(5 \lambda_2 +\lambda_3 \right)L_{2,1}(\mu)L_{2,2}(\mu)\mod\langle10\lambda_1+5\lambda_2-\lambda_3,\,L_1\rangle,\nonumber
\end{eqnarray}
with
\begin{eqnarray}
L_{2,1}(\mu)&=& 2 \lambda_1^{3} \rho^{2}-\lambda_1^{2} \rho^{2} \lambda_3 +2 \lambda_1 \rho  \omega  \lambda_4 +\lambda_1 \,\omega^{2}+\omega^{2} \lambda_3,\nonumber\\
L_{2,2}(\mu)&=&4 \lambda_1^{3} \rho +2 \lambda_1^{2} \lambda_2 \rho -2 \lambda_1^{2} \lambda_3 \rho -\lambda_1 \lambda_2 \lambda_3 \rho -2 \lambda_1 \lambda_4 \omega +2 \lambda_2 \lambda_4 \omega +2 \lambda_3 \lambda_4 \omega.
\end{eqnarray}
For parameter values $\mu^\ast$ in the variety $\{L_1(\mu)=0\}\cap\{10\lambda_1+5\lambda_2-\lambda_3=0\}$, we have
$${\rm rank}\left[\dfrac{\partial(L_1,L_2)}{\partial\mu}\right]_{\mu=\mu^\ast}=2.$$
Thus, by Theorem \ref{Teo:ChristopherFocus}, there exist parameter values $\mu^\ast$ such that system \eqref{eq:ZHcubicJordan} has three small amplitude limit cycles surrounding the origin, which in turn, by Theorem \ref{Teo:InvariantTori} implies that, for this parameter value $\mu^\ast$ and $\varepsilon>0$ sufficiently small, \eqref{eq:01} has three normally hyperbolic limit tori.

One particular example that exhibits this behavior is the following system:

\begin{eqnarray}\label{eq:0Hopf3tori}
\dot{x}&=&-y+\varepsilon \frac{x}{15} \left( 15\delta{z}^{2}-30{\tau}^{2}z+40\tau{x}^{2}+60\tau z-20{x}^{2}+24{z}^{2}-30\tau-60z+15 \right)\nonumber\\
\dot{y}&=&x,\\
\dot{z}&=&\frac{\varepsilon}{30}\left( 30{x}^{2}-{z}^{3} \left( 16-15\beta+
10\delta \right) +30 z -15 \right)
.\nonumber
\end{eqnarray}
In a neighborhood of $(\tau,\beta,\delta)=(0,0,0)$, there exist a point of parameter values $(\tau^\ast,\beta^\ast,\delta^\ast)$ such that system \eqref{eq:0Hopf3tori} has three invariant tori in its phase space. Numeric simulations of the solutions of \eqref{eq:0Hopf3tori} suggest that $(\tau^\ast,\beta^\ast,\delta^\ast)=(-6\times 10^{-6},6\times 10^{-3},6\times10^{-2})$ is one such point.

\subsubsection*{Nilpotent-zero equilibrium}

As we have anticipated in Remark \ref{obs:degreeguiding}, we were able to detect more limit tori for the nilpotent-zero case, i.e. $n=2$, than in the Hopf-zero case. In the next result, we obtain a new lower bound for $N(3)$ by exhibiting a particular family of cubic systems for which five limit tori unfold in a neighborhood of the nilpotent-zero singularity at the origin. 

\begin{teo}\label{N(3)=5}
Consider the following cubic family of differential systems
\begin{equation}\label{eq:ZNilcubic}
\begin{aligned}
\dot{x}&=-y+\varepsilon\biggl(\lambda_{1}x+4 x z+7 \lambda_{1} x y^{2} +4 x z^{2} \lambda_{2}-\frac{5 \pi^{2}\lambda_{1} }{9 \Gamma \! \left(\frac{3}{4}\right)^{4}}x^{3}\biggr),\\
\dot{y}&=x^3,\\
\dot{z}&=\varepsilon\biggl(\frac{ \left(16\tau-5 \lambda_{4}\right)}{6}z+\frac{\pi^{2} \left(2 \lambda_{3}-2 \tau^{2} -3\right)}{6 \Gamma \! \left(\frac{3}{4}\right)^{4}}x^{2} +\frac{ \left(1-2 \lambda_{3} \right)}{2}y^{2}+\frac{\pi^{2}\lambda_{4}}{2 \Gamma \! \left(\frac{3}{4}\right)^{4}}x^{2} z-\frac{\lambda_{4}}{2}y^{2} z
\biggr).
\end{aligned}
\end{equation}
There exist parameter values for $\tau,\lambda_1,\lambda_2,\lambda_3,\lambda_4$ such that for sufficiently small $\varepsilon>0$, there are five limit tori in its phase space.
\end{teo}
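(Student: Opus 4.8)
The plan is to realize system \eqref{eq:ZNilcubic} as the instance $m=3$, $n=2$ of the general construction of Section~\ref{Sec:method}, and then to produce a weak focus of order five in the associated planar guiding system. First I would apply the generalized polar change of coordinates $x=r\,\mathrm{Cs}\,\theta$, $y=r^{2}\,\mathrm{Sn}\,\theta$, $z=w$ adapted to $n=2$, rewrite the system in the standard form \eqref{eq:Averaging} with $\theta$ as the independent variable, and compute the first averaged function $\mathbf{g}_1$. By Proposition~\ref{propo:trig}, only the monomials whose trigonometric integrals $I(p,q)$ have both exponents even survive the averaging; after the rescalings of $\theta$ and of the parameters prescribed in Section~\ref{Sec:method}, the guiding system $(r',w')=\mathbf{g}_1$ falls into the family \eqref{eq:guidingformula}, which for $m=3$, $n=2$ is a planar \emph{quintic} system (cf.\ Remark~\ref{obs:degreeguiding}, $d(3,2)=5$). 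The specific rational coefficients appearing in \eqref{eq:ZNilcubic}, in particular those carrying the factor $\Gamma(3/4)^{-4}$, are exactly the ones produced by the nonvanishing integrals $I(p,q)$ for $n=2$ together with the parameter changes that place a monodromic equilibrium at the chosen point $(\rho,w_0)$.

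Next I would translate $(\rho,w_0)$ to the origin and bring the linear part into the normal form \eqref{eq:traceHopf}, so that for $\tau=0$ the origin is a weak focus. Here $\tau$ is the trace parameter controlling the eigenvalues $\tau\pm i\omega$, while $\lambda_1,\lambda_2,\lambda_3,\lambda_4$ are the four free parameters available to annihilate the successive Lyapunov quantities. I would then compute the Lyapunov coefficients $L_1(\mu),\dots,L_5(\mu)$, with $\mu=(\lambda_1,\lambda_2,\lambda_3,\lambda_4)$, by means of the algebraic recursive algorithm recalled in Subsection~\ref{sec:degHopf}, reducing each $L_i$ modulo the ideal generated by the previous ones.

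The decisive step is to exhibit a parameter value $\mu^\ast$ at which $L_1(\mu^\ast)=\dots=L_4(\mu^\ast)=0$, $L_5(\mu^\ast)\neq 0$, and the transversality condition
\[
\mathrm{rank}\left[\frac{\partial(L_1,\dots,L_4)}{\partial\mu}\right]_{\mu=\mu^\ast}=4
\]
holds. Once this is in place, Theorem~\ref{Teo:ChristopherFocus} (with $k=5$) yields parameter values $(\tau,\mu)$ near $(0,\mu^\ast)$ for which the guiding system possesses five limit cycles surrounding the origin; since these are hyperbolic, Theorem~\ref{Teo:InvariantTori} lifts each of them to a normally hyperbolic limit torus of \eqref{eq:ZNilcubic} for all sufficiently small $\varepsilon>0$, giving the claimed five limit tori.

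I expect the main obstacle to be this last step: the Lyapunov coefficients of a quintic focus are very large rational expressions, and solving $L_1=\dots=L_4=0$ while maintaining $L_5\neq 0$ and full rank is not feasible by hand. I would therefore locate $\mu^\ast$ numerically and certify its existence rigorously, applying the Poincar\'e--Miranda Theorem~\ref{Teo:PM} on a small cube around the numerical root of $(L_1,\dots,L_4)$ and establishing invertibility of the Jacobian, hence the rank condition, via the Gerschgorin Circles Theorem~\ref{Teo:Gerschgorin}, exactly as in the computer-assisted scheme described at the end of Subsection~\ref{sec:degHopf}.
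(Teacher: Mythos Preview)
Your proposal is correct and follows essentially the same route as the paper: the guiding system is derived via the $n=2$ generalized polar coordinates, the equilibrium $(\rho,w_0)=(1,0)$ is brought to Jordan form, the Lyapunov coefficients $L_1,\dots,L_5$ are computed, and the existence of $\mu^\ast$ with $L_1=\dots=L_4=0$, $L_5\neq0$ and full-rank Jacobian is certified via Poincar\'e--Miranda plus Gerschgorin, after which Theorem~\ref{Teo:InvariantTori} gives the five tori. The only discrepancy is cosmetic: the paper's proof cites Theorem~\ref{Teo:ChristopherCenter} at the key step, but the hypotheses it checks are precisely those of Theorem~\ref{Teo:ChristopherFocus} with $k=5$, exactly as you identify.
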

\begin{proof}
After applying the method described in Section \ref{Sec:method}, we obtain, after a constant rescaling, the corresponding guiding system \eqref{eq:guidingformula} given by
\[
\begin{aligned}
r'&=\frac{1}{4} \lambda_{1} r \left(r^2-1\right)^2+r w (\lambda_{2} w+1),\\
w'&=\frac{1}{8} \left(16\tau w-2 \lambda_{3} \left(r^2-1\right)^2-\left(r^4-6
   r^2+5\right) (\lambda_{4} w-1)-4 \left(r^2-1\right) \tau ^2\right),
\end{aligned}
\]
for which the singular point $(\rho,w0)=(1,0)$ has the associated eigenvalues $\tau\pm i$. We then put the guiding system into its Jordan canonical form which, for $\tau=0$, is given by
\begin{equation}\label{eq:ZNilcubicJordan}
\begin{aligned}
x'&=-y+\frac{1}{4} \lambda_{1} x^2 (x+1) (x+2)^2+y (\lambda_{2} (x+1) y-x),\\
y'&=x+\frac{1}{8} x \left(2 \lambda_{3} (x+2)^2 x-\left((x+4) x^2 (\lambda_{4}
   y+1)\right)+8 \lambda_{4} y\right).
\end{aligned}
\end{equation}
We then compute the Lyapunov coefficients $L_1(\mu),\dots,L_5(\mu)$ for $\mu=(\lambda_1,\lambda_2,\lambda_3,\lambda_4)$. Due to the size of their expressions, we present only the first two of them.
\[
\begin{aligned}
L_1(\mu)&=\frac{10}{3} \lambda_{1} -\frac{4}{3} \lambda_{1} \lambda_{3} -\frac{2}{3} \lambda_{3} \lambda_{4},\\
L_2(\mu)&=-\frac{19}{6} \lambda_{1} +\frac{19}{30} \lambda_{3} \lambda_{4} +\frac{407}{45} \lambda_{1} \lambda_{3}+\frac{2}{3} \lambda_{1}^{2} \lambda_{4} -\frac{4}{3} \lambda_{1}^{2} \lambda_{2} -\frac{14}{9} \lambda_{3}^{2} \lambda_{4} -\frac{98}{9} \lambda_{1}\lambda_{3}^{2}-\frac{16}{3} \lambda_{1}^{3}+\frac{14}{9} \lambda_{3}^{3} \lambda_{4}\\
&\quad +\frac{8}{15} \lambda_{4} \lambda_{1} \lambda_{2} \lambda_{3} +\frac{32}{15} \lambda_{1}^{3} \lambda_{3} +\frac{28}{9} \lambda_{1} \lambda_{3}^{3}-\frac{2}{15} \lambda_{3} \lambda_{4}^{3}+\frac{8}{5} \lambda_{1}^{2} \lambda_{3} \lambda_{4} +\frac{8}{15} \lambda_{1}^{2} \lambda_{2} \lambda_{3} +\frac{2}{15} \lambda_{2} \lambda_{3} \lambda_{4}^{2}.\\
\end{aligned}
\]
The result will follow, by Theorem \ref{Teo:ChristopherCenter}, from the existence of a parameter value $\mu^\ast$ such that $L_1(\mu^\ast)=L_2(\mu^\ast)=L_3(\mu^\ast)=L_4(\mu^\ast)=0$, $L_5(\mu^\ast)\neq 0$ and
$${\rm rank}\left[\frac{\partial(L_1,\dots,L_4)}{\partial\mu}\right]_{\mu=\mu^\ast}=4.$$
A numerical approximation for such a parameter is given by 
$$\hat{\mu}=(-0.1121033429,1.640345056,-0.5190794622,1.304035026),$$
for which $L_5(\hat{\mu})=-0.7309104115$ and $\det\left[\frac{\partial (L_1,\dots,L_4)}{\partial\mu}\right]_{\mu=\hat{\mu}}=-0.0894307229$. We have obtained this approximation by solving numerically $L_i=0$ with different number of digits up to 1000, to ensure the convergence of the numerical solution. The above numerical approximation is shown with only 10 digits.

Now, we perform a computer assisted proof to obtain analytically the existence of $\mu^\ast$. This is done using the Poincaré--Miranda Theorem. In order to do so, we first convert the approximate solution to the adequate rational expressions:
\begin{eqnarray}
\hat{\lambda}_1&=-\dfrac{11210334284144350283}{100000000000000000000},\quad \hat{\lambda}_2&=\dfrac{16403450551921741039}{10000000000000000000},\nonumber\\
\hat{\lambda}_3&=-\dfrac{25953973096128722529}{50000000000000000000},\quad \hat{\lambda}_4&=\dfrac{6520175133339217977}{5000000000000000000}.\nonumber
\end{eqnarray}
Next, we introduce an affine parameter change which translates $\hat{\mu}$ to the origin and such that the Jacobian matrix of $(L_1,\dots,L_4)$ at $\mu=\hat{\mu}$ is near the identity. Then, we apply the Theorem \ref{Teo:PM}, with $S=[-10^{-10},10^{-10}]^4$ and $f=(L_1,L_2,L_3,L_4)$. Performing an accurate interval analysis (see, for instance, \cite{GouveiaTorre21}), we obtain that
$$f_i(S_i^{+})\in[9.999999355\times10^{-11},1.000000079\times10^{-10}],\,\text{and}$$
$$f_i(S_i^{-})\in[ -1.000000065\times10^{-10}, -9.999999214\times10^{-11}].$$
Thus, by Theorem \ref{Teo:PM}, there exists $\mu^\ast$ such that $f(\mu^\ast)=0$. Moreover, we have $L_5(S)\in[0.9999999548,1.000000046]$ and thus $L_5(\mu^\ast)\neq 0$.

Finally, to finish the proof, we apply Theorem \ref{Teo:Gerschgorin} to conclude that $\det Jf(\mu^\ast)\neq 0$. Denoting by $Jf(\mu^\ast)=(\alpha_{i,j})$, we have that $\sum_{i\neq j}|\alpha_{i,j}|< 1.156775391\times 10^{-16}$ and $\alpha_{i,i}\in[0.9999999999,1.0000000001]$. Therefore, by the Gerschgorin Circles Theorem, all the eigenvalues of $Jf(\mu^\ast)$ are non-zero and thus, $\det Jf(\mu^\ast)\neq 0$. Hence, there exist parameter values $(\tau,\mu)$ near $(0,\mu^\ast)$ such that the guiding system has five limit cycles, which in turn, by Theorem \ref{Teo:InvariantTori} implies that system \eqref{eq:ZNilcubic} has five limit tori.
\end{proof}

From the above result, obtain the new lower bound for the number of limit tori in the phase space of cubic three-dimensional vector fields: $N_h(3)\geqslant 5$.

\subsection{Quartic systems}\label{subsec:m4}
We perform the same procedure as in the previous section to study the systems \eqref{eq:01} of degree $m=4$. Once more, the number of limit tori we were able to detect using the first order averaging method was higher for the nilpotent-zero singular points than for the Hopf-zero ones. More precisely, we were able to obtain examples of quartic systems with a Hopf-zero equilibrium point from which six limit tori can bifurcate. As for the quartic systems with nilpotent-zero equilibria, we were able to obtain examples with seven limit tori. The next result presents the latter, from which we consequentially obtain the new lower bound for $N(4)$, namely $N_h(4)\geqslant 7$.

\begin{teo} \label{N(4)=7} Consider the following quartic family of differential systems
\begin{equation}\label{eq:ZNilquartic}
\begin{aligned}
\dot{x}&=-y+\varepsilon\bigg(\frac{\lambda_{1} x}{4}-\frac{\left(7\lambda_{12}-20 -9 \lambda_{6} \right)}{20} x z-\frac{5 \lambda_{1} \pi^{2}}{36 \Gamma \! \left(\frac{3}{4}\right)^{4}}x^{3}+\frac{7 \lambda_{1} }{4}x y^{2}+\lambda_{2} x z^{2}\\
&\quad +\frac{5 \pi^{2} \left(\lambda_{12} -\lambda_{6} \right) }{36 \Gamma \! \left(\frac{3}{4}\right)^{4}}x^{3} z-\frac{7 \left(3 \lambda_{12} -\lambda_{6} \right)}{20} x y^{2} z\bigg),\\
\dot{y}&=x^3,\\
\dot{z}&=\varepsilon\bigg(\frac{\tau^{2}}{6}+\frac{31}{128}+\frac{\left(2 \lambda_{9} -5 \lambda_{4} +16 \tau \right) }{24}z-\frac{\pi^{2} \left(16 \tau^{2}+35\right) }{192 \Gamma \! \left(\frac{3}{4}\right)^{4}}x^{2}+\frac{35}{64}y^{2}+\frac{5 \lambda_{10}}{24}z^{2}+\frac{\lambda_{11}}{3}z^{3}\\
&\quad +\frac{\pi^{2} \left(3 \lambda_{4} -2 \lambda_{9} \right) }{24 \Gamma \! \left(\frac{3}{4}\right)^{4}}x^{2} z +\frac{\left(2 \lambda_{9} -\lambda_{4} \right)}{8} y^{2} z+\frac{35}{384}y^{4} +\frac{\lambda_{10} }{8}y^{2} z^{2}-\frac{\lambda_{15} }{3}z^{4}-\frac{35 \pi^{2} }{192 \Gamma \! \left(\frac{3}{4}\right)^{4}}x^{2} y^{2}\\
&\quad -\frac{\pi^2\lambda_{10} }{8 \Gamma \! \left(\frac{3}{4}\right)^{4}}x^{2} z^{2}
\bigg).
\end{aligned}
\end{equation}
There exist parameter values for $\tau,\lambda_1,\lambda_{2},\lambda_{4},\lambda_{6},\lambda_{9},\lambda_{10},\lambda_{11},\lambda_{12},\lambda_{15}$ such that for sufficiently small $\varepsilon>0$, there exist seven limit tori in its phase space.
\end{teo}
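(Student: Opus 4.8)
The plan is to follow the exact template established in the proof of Theorem~\ref{N(3)=5}, adapting it to the quartic nilpotent-zero setting with Andreev number $n=2$. First I would apply the first-order averaging procedure of Section~\ref{Sec:method} to the system \eqref{eq:ZNilquartic}: passing to generalized polar coordinates, rescaling the parameters via $I(p,q)$ as in Proposition~\ref{propo:trig}, and reducing to the planar guiding system \eqref{eq:guidingformula}. By Remark~\ref{obs:degreeguiding}, for $m=4$, $n=2$ the guiding system has degree $d(4,2)=nm=8$, so the cyclicity bound we can hope to extract is governed by $H(8)$; detecting seven limit cycles is therefore plausible. The parameters in \eqref{eq:ZNilquartic} have been pre-arranged, via the changes of parameters displayed at the end of Section~\ref{Sec:method}, so that the chosen singular point $(\rho,w_0)=(1,0)$ is a Hopf equilibrium with eigenvalues $\tau\pm i$ when $\tau=0$. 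I would then translate $(\rho,w_0)$ to the origin and put the system into the Jordan normal form \eqref{eq:traceHopf}, obtaining an explicit planar family depending on the nine parameters $\mu=(\lambda_1,\lambda_2,\lambda_4,\lambda_6,\lambda_9,\lambda_{10},\lambda_{11},\lambda_{12},\lambda_{15})$.

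Next I would compute the Lyapunov coefficients $L_1(\mu),\dots,L_7(\mu)$ of this weak focus using the algebraic recursive algorithm referenced in Subsection~\ref{sec:degHopf}, reducing each successive $L_i$ modulo the ideal generated by the previous ones. The goal is to exhibit a point $\mu^\ast$ in the Bautin variety with $L_1(\mu^\ast)=\cdots=L_6(\mu^\ast)=0$, $L_7(\mu^\ast)\neq 0$, and
$$\mathrm{rank}\left[\frac{\partial(L_1,\dots,L_6)}{\partial\mu}\right]_{\mu=\mu^\ast}=6.$$
Theorem~\ref{Teo:ChristopherFocus} (or Theorem~\ref{Teo:ChristopherCenter}, depending on the linear-independence structure of the $L_i$) would then produce parameter values near $(0,\mu^\ast)$ for which the guiding system has seven limit cycles encircling the origin. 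Finally, Theorem~\ref{Teo:InvariantTori} lifts each of these hyperbolic limit cycles to a normally hyperbolic limit torus of the original system \eqref{eq:ZNilquartic}, establishing the claimed seven limit tori and hence $N_h(4)\geqslant 7$.

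The main obstacle is purely computational and identical in nature to the one overcome in Theorem~\ref{N(3)=5}, but one degree higher: the expressions for the higher Lyapunov coefficients $L_5,L_6,L_7$ become enormous, and solving the polynomial system $L_1=\cdots=L_6=0$ exactly is infeasible. I would therefore obtain a numerical approximation $\hat{\mu}$ of the desired root, computed with enough precision (solving with up to $\sim1000$ digits to confirm convergence), convert it to rational values, and then prove the existence of a genuine zero $\mu^\ast$ nearby by a computer-assisted argument. Concretely, I would introduce an affine reparametrization translating $\hat{\mu}$ to the origin and normalizing the Jacobian of $(L_1,\dots,L_6)$ to be near the identity, apply the Poincar\'e--Miranda Theorem~\ref{Teo:PM} on a small cube $S=[-c,c]^6$ using rigorous interval arithmetic to verify the opposite-sign conditions $f_i(S_i^+)\cdot f_i(S_i^-)<0$, confirm $L_7(\mu^\ast)\neq 0$ by enclosing $L_7(S)$ away from zero, and invoke the Gerschgorin Circles Theorem~\ref{Teo:Gerschgorin} on the Jacobian at $\mu^\ast$ to certify $\det Jf(\mu^\ast)\neq 0$ and thereby the full rank condition. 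The delicate point is managing the size of the symbolic expressions and keeping the interval enclosures tight enough that the sign and nonvanishing conditions are rigorously decided.
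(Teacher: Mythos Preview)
Your overall architecture---first-order averaging to the degree-$8$ guiding system, Hopf normalization at $(\rho,w_0)=(1,0)$, cyclicity analysis, then Theorem~\ref{Teo:InvariantTori}---matches the paper. The divergence is in the cyclicity step. You propose to mimic Theorem~\ref{N(3)=5}: locate numerically a weak focus of order~$7$ (so $L_1=\cdots=L_6=0$, $L_7\neq 0$; note this point is \emph{not} in the Bautin variety, contrary to your phrasing), then certify it by Poincar\'e--Miranda and Gerschgorin. The paper instead observes that at $\mu=0$ the Jordan form is time-reversible, so $\mu=0$ lies on the Bautin variety, and applies Theorem~\ref{Teo:ChristopherCenter}: the linear parts of $L_1,L_2,L_3$ about $\mu=0$ are independent (giving $k=3$), and after the change $L_i=u_i$ the next coefficients $L_4,\dots,L_7$ have quadratic leading parts $h_4,\dots,h_7$ in the remaining parameters; explicit closed-form values (rational in $\lambda_{12},\lambda_{15}$ plus a quadratic surd $\alpha$) are found on which $h_4=h_5=h_6=0$, $h_7\neq 0$ with the transversality rank~$3$, yielding $k+l=3+4=7$ limit cycles. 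The payoff of the paper's route is that no interval-arithmetic certification is needed: the center at $\mu=0$ and the homogeneity of the $h_i$ reduce the problem to solving a small polynomial system exactly. Your route should in principle also succeed, but it imports the heavier computer-assisted machinery of the cubic case unnecessarily, and you would first have to discover that only three of the $L_i$ have independent linear parts near $\mu=0$ before the weak-focus search could be organized sensibly.
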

\begin{proof}
The guiding system \eqref{eq:guidingformula} corresponding to system \eqref{eq:ZNilquartic} is given by
\begin{small}
\begin{equation}\label{eq:ZNilquarticJordan}
\begin{aligned}
r'&=\frac{1}{20} r \left(5 \lambda_{1} \left(r^2-1\right)^2+\left(r^4 (\lambda_{6}-3 \lambda_{12})+10 r^2 (\lambda_{12}-\lambda_{6})-7 \lambda_{12}+20
   \lambda_{2} w+9 \lambda_{6}+20\right)w\right),\\
w'&=\lambda_{11} w^3-\lambda_{15} w^4+\frac{1}{8} w \left(-\lambda_{4} \left(r^4-6 r^2+5\right)+2
   \lambda_{9} \left(r^2-1\right)^2+16 \tau \right)\\
   &\quad +\frac{1}{8} \lambda_{10} \left(r^4-6 r^2+5\right) w^2+\frac{1}{128} \left(r^2-1\right) \left(5 r^6-23 r^4+47 r^2-64 \tau ^2-93\right).
\end{aligned}
\end{equation}    
\end{small}
We have that $(\rho,w0)=(1,0)$ is a singular point of the above system, whose associated eigenvalues are $\tau\pm i$. For $\tau=0$, the Jordan canonical form of the guiding system is given by
\begin{equation}
\begin{aligned}
x'&=-y+\frac{1}{20} \biggl(5 \lambda_{1} x^2 (x+1) (x+2)^2+y \biggl(20 \lambda_{6} x^2-4 x (2 \lambda_{12}-5 \lambda_{2} y-4 \lambda_{6}+5)\\
&\quad + (3 \lambda_{12}-\lambda_{6})x^5+5(3\lambda_{12}-\lambda_{6})x^4+20 \lambda_{12} x^3+20 \lambda_{2} y\biggr)\biggr),\\
y'&=x-\frac{1}{8} x^4 y (\lambda_{10} y+\lambda_{4}-2 \lambda_{9})-\frac{1}{2} x^3 y (\lambda_{10} y+\lambda_{4}-2 \lambda_{9})+x y (\lambda_{10} y+\lambda_{4})+\lambda_{9} x^2 y\\
&\quad+y^3
   (\lambda_{11}+\lambda_{15} y)-\frac{5 x^8}{128}-\frac{5 x^7}{16}-\frac{7 x^6}{8}-\frac{7 x^5}{8}.
\end{aligned}
\end{equation}
 We then compute the Lyapunov coefficients $L_1(\mu),\dots,L_7(\mu)$, with 
 \[\mu=(\lambda_1,\lambda_{2},\lambda_{4},\lambda_{6},\lambda_{9},\lambda_{10},\lambda_{11},\lambda_{12},\lambda_{15}).\]
 Notice that for $\mu=0$, the above system is time-reversible and therefore, $\mu=0$ correspond to a point in the Bautin variety $V^c$. Denoting by $L_i^k(\mu)$ the homogeneous term of order $k$ in the power series expansion of $L_i$ about $\mu=0$, we have that
\[
\begin{aligned}
L_1^1(\mu)&=\frac{10 \lambda_{1}}{3}+2 \lambda_{11} +\frac{2 \lambda_{9}}{3},\\
L_2^1(\mu)&=-\frac{58 \lambda_{1}}{15}-\frac{16 \lambda_{11}}{5}-\frac{11 \lambda_{4}}{60}-\frac{7 \lambda_{9}}{10},\\
L_3^1(\mu)&=\frac{1511 \lambda_{1}}{140}+\frac{1481 \lambda_{11}}{210}+\frac{181 \lambda_{4}}{180}+\frac{2213 \lambda_{9}}{1260}.
\end{aligned}
\]
After the change of variables $L_i=u_i$, for $i=1,2,3$, we have that, for $u_1=u_2=u_3=0$,
\[
\begin{aligned}
L_4^2(\mu)&=-\tfrac{8838413}{7068600} \lambda_{12} \lambda_{2} +\tfrac{1492013}{7068600} \lambda_{6} \lambda_{2} -\tfrac{3293837}{353430} \lambda_{15} \lambda_{2} +\tfrac{22554373}{20790000} \lambda_{12} \lambda_{9} +\tfrac{777737}{2970000} \lambda_{9} \lambda_{6}\\
&\quad +\tfrac{777737}{1386000} \lambda_{10} \lambda_{9} +\tfrac{8887}{47600} \lambda_{10} \lambda_{2} +\tfrac{1000259}{841500} \lambda_{15} \lambda_{9},\\
L_5^2(\mu)&=\tfrac{5185872127}{933055200} \lambda_{12} \lambda_{2} -\tfrac{129360589}{133293600} \lambda_{6} \lambda_{2} +\tfrac{939872357}{23326380} \lambda_{15} \lambda_{2} -\tfrac{955826051}{171517500} \lambda_{12} \lambda_{9} -\tfrac{32959519}{24502500} \lambda_{9} \lambda_{6}\\
&\quad -\tfrac{32959519}{11434500} \lambda_{10} \lambda_{9} -\tfrac{6236687}{7068600} \lambda_{10} \lambda_{2} -\tfrac{2057892563}{388773000} \lambda_{15} \lambda_{9},\\
L_6^2(\mu)&=\tfrac{3145321040171}{509448139200} \lambda_{6} \lambda_{2} -\tfrac{80101673692}{318405087} \lambda_{15} \lambda_{2} +\tfrac{2149056494527}{59935075200} \lambda_{12} \lambda_{9} +\tfrac{74105396363}{8562153600} \lambda_{9} \lambda_{6}\\
&\quad +\tfrac{74105396363}{3995671680} \lambda_{10} \lambda_{9} +\tfrac{1577199913}{280687680} \lambda_{10} \lambda_{2} +\tfrac{713875167457}{21227005800} \lambda_{15} \lambda_{9} -\tfrac{17988230785463}{509448139200} \lambda_{12} \lambda_{2}.
\end{aligned}
\]
Now, for parameter values
\[
\begin{aligned}
\lambda_{10} &=-\frac{4417345784047540848}{126350370961756205} \lambda_{15} \alpha -8 \lambda_{12} -\frac{465380402374596362472}{631751854808781025} \lambda_{15},\\
\lambda_{2} &= \frac{\lambda_{9} \alpha}{4},\\ \lambda_{6} &= 
\frac{1472448594682513616}{25270074192351241} \lambda_{15} \alpha +13 \lambda_{12} +\frac{199448743874827012488}{126350370961756205} \lambda_{15},
\end{aligned}
\]
where
\[\alpha=-\frac{2693013801128322183\pm\sqrt{7224416669143576590544700439372137397}}{184056074335314202},\]
we have that $L_4^2=L_5^2=L_6^2=0$,
\[
L_7^2=-\frac{2897596542100597697456531 \lambda_{15} \lambda_{9} \alpha}{163913830847202477721680},\quad {\rm rank}\left[\dfrac{\partial(L_4^2,L_5^2,L_6^2)}{\partial(\lambda_{10},\lambda_{2},\lambda_{6})}\right]=3.
\]
Therefore, assuming additionally $\lambda_{9}\lambda_{15}\neq 0$, by Theorem \ref{Teo:ChristopherCenter}, there are parameter values $(\tau,\mu)$ for which the guiding system \eqref{eq:ZNilquarticJordan} has seven limit cycles and consequentially, system \eqref{eq:ZNilquartic} has seven limit torus in its phase space.
\end{proof}

\subsection{Quintic systems}\label{subsec:m5}

For quintic systems, there are two types of nilpotent-zero singularities, respectively, those with Andreev number $n=2,3$. As illustrated in Section \ref{Sec:method} and by the previous results, via the averaging method, one can detect more limit tori from the nilpotent-zero singularities then the Hopf-zero ones. However, there is also a distinction between the nilpotent-zero singularities in what concerns limit tori bifurcation in the sense that, the more degenerate the singularity is (i.e. the greater the value of $n$), the more limit tori can be obtained via the first order averaging method. The next two results, stated as Theorems \ref{Teo:Quintic11tori} and \ref{N(5)=13} exemplify this distinction. Since their proofs are the same, except for the specific functions and expressions, we only present the proof of Theorem \ref{N(5)=13}, for it is the result which gives the best lower bound for $N_h(5)$.  

\begin{teo}[Andreev number $n=2$]\label{Teo:Quintic11tori} Consider the following quintic family of differential systems
\begin{small}
\begin{equation}\label{eq:ZNiln2quintic}
\begin{aligned}
\dot{x}&=-y+\varepsilon\biggl(\frac{\left(35 \lambda_{1} -12 \lambda_{18} \right) }{128}x-\frac{\left(7 \lambda_{12}-20 \right)}{20} x z-\lambda_{19} x \,z^{2}+\frac{7 \left(21 \lambda_{1} -20 \lambda_{18} \right) }{64}x y^{2}+\frac{5\pi^{2} \lambda_{12} }{36 \Gamma \! \left(\frac{3}{4}\right)^{4}} x^{3} z\\
&\quad -\frac{21 \lambda_{12}}{20} x y^{2} z-\frac{5 \pi^{2} \left(\lambda_{1} -4 \lambda_{18} \right) }{64 \Gamma \! \left(\frac{3}{4}\right)^{4}}x^{3}y^{2}+\frac{77 \left(\lambda_{1} -4 \lambda_{18} \right)}{1152} x y^{4}+\lambda_{20} x z^{4}
\biggr),\\
\dot{y}&=x^3-\varepsilon\biggl(\frac{5 \pi^{2}\left(55 \lambda_{1} -28 \lambda_{18} \right)}{576 \Gamma \! \left(\frac{3}{4}\right)^{4}}x^{2} y -\frac{5 \pi^{2} \lambda_{19}}{6 \Gamma \! \left(\frac{3}{4}\right)^{4}} x^{2} y z^{2}
\biggr),\\
\dot{z}&=\varepsilon\biggl(\frac{\tau^{2}}{6}+\frac{7 \lambda_{8}}{96}+\frac{31}{128}+\frac{\left(29 \lambda_{9} +128 \tau -14 \lambda_{14} \right) }{192}z -\frac{\pi^{2} \left(16 \tau^{2}+24 \lambda_{8} +35\right) }{192 \Gamma \! \left(\frac{3}{4}\right)^{4}}x^{2}+\frac{5 \lambda_{10} }{24}z^{2}\\
&\quad +\frac{5 \left(7+12 \lambda_{8} \right)}{64} y^{2}+\frac{\pi^{2} \left(12 \lambda_{14} -19 \lambda_{9} \right) }{96 \Gamma \! \left(\frac{3}{4}\right)^{4}}x^{2} z+\frac{\left(8 \lambda_{11} -5 \lambda_{16}\right) }{24}z^{3}-\frac{5 \left(6 \lambda_{14} -7 \lambda_{9} \right) }{32}y^{2} z\\
&\quad -\frac{5 \pi^{2} \left(16 \lambda_{8} +7\right) }{192 \Gamma \! \left(\frac{3}{4}\right)^{4}}x^{2} y^{2}-\frac{\pi^{2}\lambda_{10} }{8 \Gamma \! \left(\frac{3}{4}\right)^{4}}x^{2} z^{2}+\frac{7 \left(5+12 \lambda_{8} \right)}{384} y^{4}-\frac{\lambda_{17}}{3}z^{4}+\frac{\lambda_{10}}{8} y^{2} z^{2}\\
&\quad -\frac{7 \left(6 \lambda_{14} -5 \lambda_{9} \right)}{192} y^{4} z-\frac{\lambda_{16} }{8}y^{2} z^{3}+\frac{\pi^{2}\lambda_{16} }{8 \Gamma \! \left(\frac{3}{4}\right)^{4}}x^{2} z^{3}+\frac{5 \pi^{2} \left(8 \lambda_{14} -7 \lambda_{9} \right) }{96 \Gamma \! \left(\frac{3}{4}\right)^{4}}x^{2} y^{2} z+\frac{\lambda_{23}}{3} z^{5}
\biggr).
\end{aligned}
\end{equation}
\end{small}
There exist parameter values for $\tau,\lambda_{1},  \lambda_{8}, \lambda_{9},\lambda_{10}, \lambda_{11}, \lambda_{12}, \lambda_{14}
, \lambda_{16}, \lambda_{17}, \lambda_{18}, \lambda_{19}, \lambda_{20}
, \lambda_{23}$ such that for sufficiently small $\varepsilon>0$, there exist eleven limit tori in its phase space.
\end{teo}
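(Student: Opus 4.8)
The plan is to follow, step by step, the same strategy used in the proof of Theorem~\ref{N(4)=7}, adapting it to the quintic case with Andreev number $n=2$. First I would apply the first-order averaging procedure of Section~\ref{Sec:method} to system~\eqref{eq:ZNiln2quintic}, obtaining, after the rescalings that produce the normalized form~\eqref{eq:guidingformula}, the associated planar guiding system in the variables $(r,w)$. One then checks directly that $(\rho,w_0)=(1,0)$ is a singular point whose linearization has eigenvalues $\tau\pm i$, so that for $\tau=0$ it is of Hopf type. Translating this equilibrium to the origin and bringing the linear part into the Jordan form~\eqref{eq:traceHopf} yields a planar polynomial system whose degree, by Remark~\ref{obs:degreeguiding}, equals $d(5,2)=9$; this higher degree is exactly what creates the room needed for eleven limit cycles.

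The core of the argument is an application of Theorem~\ref{Teo:ChristopherCenter} (bifurcation from a center). I would first observe that setting all parameters $\lambda_i$ equal to zero renders the Jordan-form system time-reversible, so $\mu=0$ lies in the Bautin variety $V^c$. I would then compute the Lyapunov coefficients $L_1(\mu),\dots,L_{11}(\mu)$ and expand each in homogeneous terms about $\mu=0$. The aim is to identify the number $k$ of coefficients whose linear parts are linearly independent and, after the coordinate change $L_i=u_i$ for $i\le k$, to show that the remaining $l=11-k$ coefficients have linear parts lying in the span of the first $k$, so that under $L_1=\cdots=L_k=0$ their leading nonvanishing contributions are homogeneous of some fixed degree $\ge 2$ in the residual parameters.

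The decisive and most delicate step is then to exhibit a line $\eta$ in the residual parameter space along which $h_i(\eta)=0$ for $k<i<k+l$, with the hypersurfaces $h_i=0$ meeting transversally along $\eta$ and $h_{k+l}(\eta)\neq 0$, precisely as in the proof of Theorem~\ref{N(4)=7}. There, the line was produced by solving explicitly for parameters such as $\lambda_{10},\lambda_2,\lambda_6$ in terms of a free parameter and an algebraic quantity $\alpha$ satisfying a quadratic; here I expect the analogous construction to require solving a larger polynomial system in the residual $\lambda_i$, yielding the parametrization of $\eta$ together with a nonzero top coefficient and the full-rank Jacobian condition. Once such parameter values are secured, Theorem~\ref{Teo:ChristopherCenter} guarantees eleven limit cycles of the guiding system near the origin, and Theorem~\ref{Teo:InvariantTori} transfers these into eleven normally hyperbolic limit tori of~\eqref{eq:ZNiln2quintic} for all sufficiently small $\varepsilon>0$.

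The main obstacle I anticipate is computational rather than conceptual: the Lyapunov coefficients for the degree-nine Jordan-form system grow enormously in size, so both their symbolic computation and the elimination needed to produce the explicit line $\eta$ (and to certify transversality through the rank condition and $h_{k+l}(\eta)\neq 0$) will demand careful algebraic manipulation. Should a fully closed-form description of $\eta$ prove intractable, I would fall back on a computer-assisted verification in the spirit of the proof of Theorem~\ref{N(3)=5}, using the Poincaré--Miranda Theorem~\ref{Teo:PM} together with the Gerschgorin Circles Theorem~\ref{Teo:Gerschgorin} to establish the existence of the required parameter value and the nonvanishing of the relevant Jacobian.
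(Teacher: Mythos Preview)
Your proposal is correct and follows essentially the same approach the paper uses: the paper explicitly states that the proof of Theorem~\ref{Teo:Quintic11tori} is identical to that of Theorem~\ref{N(5)=13} ``except for the specific functions and expressions,'' and that proof proceeds exactly as you describe---derive the guiding system with Hopf point $(1,0)$, pass to Jordan form, observe that $\mu=0$ gives a time-reversible center, compute the Lyapunov coefficients, separate the $k$ with independent linear parts from the remaining $l$ whose leading terms are quadratic in the residual parameters, exhibit a line $\eta$ satisfying the hypotheses of Theorem~\ref{Teo:ChristopherCenter}, and finish with Theorem~\ref{Teo:InvariantTori}. Your anticipation of the computational difficulties and the Poincar\'e--Miranda fallback is reasonable, though in the paper's treatment of the analogous $n=3$ case the line $\eta$ is found in closed (albeit very large) form rather than via interval methods.
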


\begin{teo}[Andreev number $n=3$]\label{N(5)=13} Consider the following quintic family of differential systems

\begin{small}
\begin{equation}\label{eq:ZNiln3quintic}
\begin{aligned}
\dot{x}&=-y+\varepsilon\biggl(\frac{\left(4283 \lambda_{1}-1772 \lambda_{18} +2552 \lambda_{25} \right)}{3840}x+4 x z-\frac{\left(143 \lambda_{1}-572 \lambda_{18} +1112 \lambda_{25} \right)}{80} xy^{2}\\
&\quad -\frac{4 \left(35 \lambda_{19} -34 \lambda_{26} \right)}{35} x z^{2}+\frac{\pi^{3}2^{\frac{1}{3}} \sqrt{3} \left(9009 \lambda_{1} -10436 \lambda_{18} +17256 \lambda_{25} \right) }{54000 \Gamma \! \left(\frac{2}{3}\right)^{6}} x^{5}+\frac{8 \lambda_{26} }{7}x y^{2} z^{2}\\
&\quad +\frac{\pi 2^{\frac{2}{3}} \sqrt{3}   \left(429 \lambda_{1} -1716 \lambda_{18} +4936 \lambda_{25} \right) }{4800 \Gamma \! \left(\frac{2}{3}\right)^{3}}x^{3} y^{2}-\frac{\left(7\lambda_{1}-28 \lambda_{18} +88 \lambda_{25} \right)}{360}x y^{4}+4 \lambda_{20} x z^{4}
\biggr),\\
\dot{y}&=x^5-\varepsilon\biggl(\frac{\pi 2^{\frac{2}{3}} \sqrt{3} \left(5679 \lambda_{1} -3516 \lambda_{18} +5336 \lambda_{25} \right) }{2400 \Gamma \! \left(\frac{2}{3}\right)^{3}}x^{2} y-\frac{4 \pi  2^{\frac{2}{3}} \sqrt{3} \left(\lambda_{19} -\lambda_{26} \right)}{\Gamma \! \left(\frac{2}{3}\right)^{3}} x^{2} y z^{2}\biggr),\\
\dot{z}&=\varepsilon\biggl(\frac{\tau^{2}}{2}+\frac{1151}{1536}-\frac{\left(213 \lambda_{14} +20 \lambda_{27} -383 \lambda_{9} -1536 \tau \right)}{768}z -\frac{\pi 2^{\frac{2}{3}} \sqrt{3} \left(32 \tau^{2}+77\right) }{96 \Gamma \! \left(\frac{2}{3}\right)^{3}} x^{2}-\frac{77 }{48}y^{2}\\
&\quad+\frac{\pi 2^{\frac{2}{3}} \sqrt{3}  \left(33 \lambda_{14} +4 \lambda_{27} -45 \lambda_{9} \right) }{48 \Gamma \! \left(\frac{2}{3}\right)^{3}} x^{2} z+\frac{\left(93 \lambda_{14} -77 \lambda_{9} +20 \lambda_{27} \right) }{24}y^{2} z\\
&\quad +\frac{\left(2 \lambda_{23} -7 \lambda_{16} +12 \lambda_{11} \right) }{12}z^{3}+\frac{385 \pi^{3} 2^{\frac{1}{3}} \sqrt{3} }{1728 \Gamma \! \left(\frac{2}{3}\right)^{6}}x^{4}+\frac{55 \pi 2^{\frac{2}{3}} \sqrt{3}}{128 \Gamma \! \left(\frac{2}{3}\right)^{3}}x^{2} y^{2}-\frac{35}{576}y^{4}-\lambda_{17} z^{4}\\
&\quad -\frac{\pi  \left(69 \lambda_{14} +20 \lambda_{27} -55 \lambda_{9} \right) 2^{\frac{2}{3}} \sqrt{3}\, x^{2} y^{2} z}{64 \Gamma \! \left(\frac{2}{3}\right)^{3}} +\frac{5 \left(9 \lambda_{14} +4 \lambda_{27} -7 \lambda_{9} \right) }{288}y^{4} z-\frac{\left(\lambda_{16} -2 \lambda_{23} \right) }{6}y^{2} z^{3}\\
&\quad+\frac{\pi 2^{\frac{2}{3}} \sqrt{3} \left(5 \lambda_{16} -2 \lambda_{23} \right) }{12 \Gamma \! \left(\frac{2}{3}\right)^{3}} x^{2} z^{3}-\frac{5 \pi^{3} 2^{\frac{1}{3}} \sqrt{3} \left(75 \lambda_{14} +12 \lambda_{27} -77 \lambda_{9} \right) }{864 \Gamma \! \left(\frac{2}{3}\right)^{6}} x^{4} z+\lambda_{24} z^{5}
\biggr).
\end{aligned}
\end{equation}   
\end{small}
There exist parameter values for $\tau,\lambda_{1},\lambda_{9},\lambda_{11}, \lambda_{14}, \lambda_{16}, \lambda_{17}
, \lambda_{18}, \lambda_{19}, \lambda_{20}, \lambda_{23}, \lambda_{24}
, \lambda_{25}, \lambda_{26}, \lambda_{27}
$ such that for sufficiently small $\varepsilon>0$, there exist thirteen limit tori in its phase space.
\end{teo}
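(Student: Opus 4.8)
The plan is to follow the same strategy used in the proofs of Theorems~\ref{N(3)=5} and~\ref{N(4)=7}, adapting it to the degree-$13$ guiding system associated with~\eqref{eq:ZNiln3quintic}. First I would apply the first-order averaging procedure developed in Section~\ref{Sec:method} to system~\eqref{eq:ZNiln3quintic}, producing, after the rescalings justified by Proposition~\ref{propo:trig}, the corresponding guiding system of the form~\eqref{eq:guidingformula}. By Remark~\ref{obs:degreeguiding}, for $m=5$ and $n=3$ this guiding system is a planar polynomial system of degree $d(5,3)=13$, so the target of thirteen limit tori is compatible with the cyclicity that such a system can in principle support, and it is precisely the higher degeneracy of the nilpotent-zero singularity that makes this count reachable.

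Next I would verify that $(\rho,w_0)=(1,0)$ is a singular point of the guiding system whose Jacobian has eigenvalues $\tau\pm i$, exactly as arranged by the successive changes of parameters in Section~\ref{Sec:method}. Translating this point to the origin and passing to the Jordan canonical form~\eqref{eq:traceHopf} at $\tau=0$, I would check that the resulting family is time-reversible at $\mu=0$, so that $\mu=0$ lies in the Bautin variety $V^c$ and the origin is a center there. This places the analysis squarely in the setting of Theorem~\ref{Teo:ChristopherCenter}, which is the appropriate tool here since reaching a count as high as thirteen through a weak-focus bifurcation (Theorem~\ref{Teo:ChristopherFocus}) alone would be far less efficient.

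The core computation is then to expand the Lyapunov coefficients $L_1,\dots,L_{13}$ about $\mu=0$. I would identify those whose linear parts $L_i^1$ are linearly independent, say the first $k$ of them, perform a change of coordinates in parameter space setting $L_i=u_i$ for $i=1,\dots,k$, and then examine the leading (quadratic) homogeneous parts $L_i^2$ of the remaining coefficients after imposing $u_1=\dots=u_k=0$, exactly as in the proof of Theorem~\ref{N(4)=7}. The goal is to exhibit a line $\eta$ in parameter space along which $L_{k+1}^2,\dots,L_{12}^2$ vanish with transversally intersecting zero-hypersurfaces while $L_{13}^2(\eta)\neq 0$; these are precisely the hypotheses of Theorem~\ref{Teo:ChristopherCenter}, and they yield thirteen limit cycles of the guiding system near the origin. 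Each such limit cycle then lifts, by Theorem~\ref{Teo:InvariantTori}, to a normally hyperbolic limit torus of~\eqref{eq:ZNiln3quintic}, giving the claimed thirteen limit tori.

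The main obstacle is computational rather than conceptual. The degree-$13$ guiding system forces the Lyapunov coefficients to have extremely large symbolic expressions, so both the determination of the rank structure of their linear parts and the search for a suitable line $\eta$ become delicate; indeed, the appearance of the quadratic surd $\alpha$ already in the quartic case of Theorem~\ref{N(4)=7} suggests that the admissible parameter values here will likewise be algebraic of nontrivial degree. Should an exact parameter choice prove intractable, I would instead locate an approximate solution numerically with high precision and certify it analytically via the Poincar\'e--Miranda Theorem~\ref{Teo:PM}, together with the Gerschgorin Circles Theorem~\ref{Teo:Gerschgorin} to guarantee the nonvanishing of the relevant Jacobian, exactly as carried out in the computer-assisted portion of the proof of Theorem~\ref{N(3)=5}.
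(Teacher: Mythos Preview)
Your proposal is correct and follows essentially the same approach as the paper's proof. In the actual execution, the rank of the linear parts at $\mu=0$ turns out to be $k=8$, the remaining quadratic forms $h_9,\dots,h_{13}$ depend on the six parameters $\lambda_{17},\lambda_{20},\lambda_{23},\lambda_{25},\lambda_{26},\lambda_{27}$, and the admissible line $\eta$ is obtained exactly (with algebraic coefficients too large to display) rather than via the Poincar\'e--Miranda certification you mention as a fallback.
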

\begin{proof}
After applying the method described in Section \ref{Sec:method}, we obtain a guiding system \eqref{eq:guidingformula} with a singular point $(\rho,w_0)=(1,0)$ whose associated eigenvalues are $\tau\pm i$. Translating the singular point to the origin and putting the guiding system into its Jordan canonical form yields the following system, for $\tau=0$:
\begin{small}
\begin{equation}\label{eq:ZNiln3quinticJordan}
\begin{aligned}
 x'&=-y+\lambda_{1} x^2-x y+\frac{(271 \lambda_{1}+116 \lambda_{18}-136 \lambda_{25})}{150} x^3 +\left(2 \lambda_{19}-\frac{64 \lambda_{26}}{35}\right)x y^2+\lambda_{20} y^4\\
 &\quad +\frac{ (1001 \lambda_{1}+1996 \lambda_{18}-1816 \lambda_{25})}{1200}x^4+ \left(3 \lambda_{19}-\frac{12 \lambda_{26}}{5}\right)x^2 y^2+\lambda_{18} x^5+\lambda_{19} x^3 y^2+\lambda_{20} x y^4\\
 &\quad +\lambda_{25} x^6+\lambda_{26} x^4 y^2+\frac{3\lambda_{26}}{5}  x^5 y^2+\frac{\lambda_{26}}{5}  x^6 y^2-\frac{3 (143 (\lambda_{1}-4 \lambda_{18})+1912 \lambda_{25})}{6400} x^8+\frac{ \lambda_{26}}{35} x^7 y^2\\
 &\quad -\frac{ (143 \lambda_{1}-572 \lambda_{18}+1812 \lambda_{25})}{2400}x^9-\frac{143 (7 \lambda_{1}-28 \lambda_{18}+88 \lambda_{25})}{38400} x^{10}\\
 &\quad -\frac{13  (7 \lambda_{1}-28 \lambda_{18}+88 \lambda_{25})}{12800}x^{11}-\frac{13 (7 \lambda_{1}-28 \lambda_{18}+88 \lambda_{25})}{76800}x^{12}-\frac{(7 \lambda_{1}-28 \lambda_{18}+88 \lambda_{25})}{76800}x^{13},\\
 y'&=x+\lambda_{11} y^3+\lambda_{9} x^2 y+\lambda_{14} x^3 y+\lambda_{16} x y^3+\lambda_{17} y^4+\lambda_{23} x^2 y^3+\lambda_{24} y^5-\frac{5(\lambda_{16}-2 \lambda_{23})}{6}x^3 y^3\\
 &\quad +\lambda_{27} x^5 y+\frac{77 }{48}x^6+\frac{(102 \lambda_{14}+68 \lambda_{27}-77 \lambda_{9})}{24} x^6 y-\frac{5(\lambda_{16}-2 \lambda_{23})}{8} x^4 y^3 +\frac{11 }{4}x^7+\frac{275}{128}x^8\\
 &\quad +\frac{(57 \lambda_{14}+28 \lambda_{27}-44 \lambda_{9})}{8} x^7 y +\frac{(2 \lambda_{23}-\lambda_{16})}{4} x^5 y^3 +\frac{(354 \lambda_{14}+160 \lambda_{27}-275 \lambda_{9})}{64} x^8 y \\
 &\quad +\frac{(2 \lambda_{23}-\lambda_{16})}{24} x^6 y^3 +\frac{385}{384} x^9+\frac{55(9 \lambda_{14}+4 \lambda_{27}-7 \lambda_{9})}{192} x^9 y +\frac{77}{256}x^{10} +\frac{7}{128} x^{11}+\frac{7 }{1536}x^{12}\\
 &\quad + \frac{11(9 \lambda_{14}+4 \lambda_{27}-7 \lambda_{9})}{128} x^{10} y+\frac{(9 \lambda_{14}+4 \lambda_{27}-7 \lambda_{9})}{64} x^{11} y +\frac{(9 \lambda_{14}+4 \lambda_{27}-7 \lambda_{9})}{768} x^{12} y.
\end{aligned}
\end{equation}
\end{small}
We then turn to study the cyclicity of the origin of system \eqref{eq:ZNiln3quinticJordan}. Denoting by
\[\mu=(\lambda_{1},\lambda_{9},\lambda_{11}, \lambda_{14}, \lambda_{16}, \lambda_{17}
, \lambda_{18}, \lambda_{19}, \lambda_{20}, \lambda_{23}, \lambda_{24}
, \lambda_{25}, \lambda_{26}, \lambda_{27}),\]
notice that the parameter value $\mu=0$ correspond to a point in the Bautin Variety, since for $\mu=0$, system \eqref{eq:ZNiln3quinticJordan} becomes
\[x'=-y-x y,\quad y'=x+\frac{77
   x^6}{48}+\frac{11 x^7}{4}+\frac{275 x^8}{128}+\frac{385 x^9}{384}+\frac{77 x^{10}}{256}+\frac{7 x^{11}}{128}+ \frac{7 x^{12}}{1536},\]
which is time-reversible. Hence, we are set to verify if the hypothesis of Theorem \ref{Teo:ChristopherCenter} are satisfied, by computing $L_1(\mu),\dots,L_{13}(\mu)$. Due to their size, we exhibit only the linear terms of the power series expansion about $\mu=0$ of the first three ones:
\[
\begin{aligned}
L_1^1(\mu)&=\tfrac{221}{75}\lambda_{1}+\tfrac{116 }{75}\lambda_{18}-\tfrac{136}{75} \lambda_{25}+2 \lambda_{11} +\tfrac{4 }{3}\lambda_{19}-\tfrac{128 }{105}\lambda_{26}+\tfrac{2 }{3}\lambda_{9},\\
L_2^1(\mu)&=\tfrac{553}{750}\lambda_{18}+\tfrac{4}{15}\lambda_{14}+\tfrac{152}{75}\lambda_{26}-\tfrac{16 }{15}\lambda_{9}+\frac{2}{5} \lambda_{23}-\tfrac{28}{15}\lambda_{19}-\tfrac{4051}{1000}\lambda_{1}-\tfrac{16}{5} \lambda_{11}+2 \lambda_{24}\\
&\quad +\tfrac{1993}{1125}\lambda_{25},\\
L_3^1(\mu)&=-\tfrac{26101}{15750}\lambda_{18}+\tfrac{73}{126} \lambda_{14}-\tfrac{15668}{3675} \lambda_{26}+\tfrac{1849}{1260}\lambda_{9}-\tfrac{23}{42} \lambda_{23}+\tfrac{62}{15} \lambda_{19}+\tfrac{506551}{63000}\lambda_{1}-\tfrac{17}{84} \lambda_{16}\\
&\quad +\tfrac{751}{105} \lambda_{11}-\tfrac{100}{21}\lambda_{24}+\tfrac{25}{21} \lambda_{27}-\tfrac{42131}{23625} \lambda_{25}.
\end{aligned}
\]
We have that ${\rm rank}\left[\frac{\partial(L_1,\dots,L_8)}{\partial\mu}\right]_{\mu=0}=8$. Performing the change of variables $L_i=u_i$, for $i=1,\dots, 8$, allow us to write the next Lyapunov coefficients, for $u_1=\dots=u_8=0$ as
\[
L_{8+i}(\mu)=h_{8+i}(\lambda_{17}, \lambda_{20}, \lambda_{23}, \lambda_{25}, \lambda_{26}
, \lambda_{27})+O(|\mu|^3),
\]
for $i=1,\dots,5$, where $h_{8+i}$ is a quadratic polynomial. We have omitted their explicit expressions, as their coefficients are rational numbers whose numerators and denominators have more than 40 digits each. The result will follow, by Theorem \ref{Teo:ChristopherCenter}, from the existence of a parameter $\hat{\mu}=(\hat{\lambda}_{17},\hat{\lambda}_{20},\hat{\lambda}_{23},\hat{\lambda}_{25},\hat{\lambda}_{26},\hat{\lambda}_{27})$ such that $h_{9}(\hat{\mu})=\dots=h_{12}(\hat{\mu})=0$, $h_{13}(\hat{\mu})\neq 0$ and ${\rm rank}\left[\frac{\partial(h_{9},\dots,h_{12})}{\partial\mu}\right]_{\mu=\hat{\mu}}=4$. The explicit expression for such a $\hat{\mu}$ contain rational numbers whose fractional representation are too large to fit into the pages of the present paper. Thus, we present a decimal approximation of this expression up to ten digits, given by
\[
\begin{aligned}
\hat{\lambda}_{17}& = \lambda_{17}, \quad &\hat{\lambda}_{20} = 
- 1.711603852 \lambda_{25},\\
\hat{\lambda}_{23} &= - 7.152529612 \lambda_{25}
, \quad &\hat{\lambda}_{25} = \lambda_{25}, \\
\hat{\lambda}_{26} &= 
 7.719711739 \lambda_{25}, \quad &\hat{\lambda}_{27} = - 5.122805170 \lambda_{25}.
\end{aligned} 
 \]
The same situation happens for the explicit expression of $h_{13}(\hat{\mu})$, whose approximation is given by
\[
h_{13}(\hat{\mu})=- 317.0368381\,\lambda_{17}\lambda_{25}.
\]
Thus, by Theorem \ref{Teo:ChristopherCenter}, there are parameter values $(\tau,\mu)$ for which the guiding system \eqref{eq:ZNiln3quinticJordan} has thirteen limit cycles, which in turn implies by Theorem \ref{Teo:InvariantTori} that system \eqref{eq:ZNiln3quintic} has 13 limit tori in its phase space.
\end{proof}

\subsection{A step into second order}\label{Sec:N2>3}

Using second order analysis, we were able to obtain the following result, from which we obtain the improved lower bound on the number of normally hyperbolic limit tori for quadratic three-dimensional systems: $N_h(2)\geqslant 3$.

\begin{teo}\label{N(2)=3}
Consider the following parametric family of differential systems
\begin{small}
\begin{eqnarray}\label{eq:ZHquadratic_ex}
\dot{x}&=&-y+\varepsilon  \left(x^2+\left(\tau ^2-4 \tau +8\right)
   x+\frac{5 x y}{2}-\frac{y^2}{3}-z (z+1)\right)+\frac{\varepsilon ^2}{2}
   \left(2 \tau ^2-8 \tau +15\right) x,\\
\dot{y}&=&x+\varepsilon 
   \left(\frac{1}{2} \left(\beta z^2-2 \left(\tau ^2-4
   \tau +8\right) y\right)+\frac{16 x
   z}{3}\right)+\varepsilon
   ^2 \left(\frac{1}{2} \left(\beta z^2-2 \left(\tau ^2-4
   \tau +8\right) y\right)+\frac{16 x z}{3}\right),\nonumber\\
\dot{z}&=&\frac{\varepsilon}{32}\left(2 x^2+2 x (y+16
   z)-2 y^2+16 y z\right)+\frac{\varepsilon ^2}{32}  \left(-8 (2
   \alpha+3) z^2-\tau ^2+4 \tau +16 (\tau -1)
   z-8\right).\nonumber
\end{eqnarray}
\end{small}
There exist parameter values in a neighborhood of $(\tau,\alpha,\beta)=(0,-\frac{9}{5},-\frac{52}{75})$ such that system \eqref{eq:ZHquadratic_ex} has three normally hyperbolic invariant tori in its phase space for sufficiently small $\varepsilon>0$.
\end{teo}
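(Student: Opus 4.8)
The plan is to reproduce the scheme of the proofs of Theorems~\ref{N(3)=5}, \ref{N(4)=7} and \ref{N(5)=13}, but one order higher, since for $m=2$ the first-order guiding system carries only a center. As $n=1$ here, I would first introduce $x=r\cos\theta$, $y=r\sin\theta$, $z=w$ (so the generalized trigonometric functions reduce to the usual ones and $T=2\pi$), take $\theta$ as the independent variable, and write the planar non-autonomous system in the standard form~\eqref{eq:Averaging}, namely $(dr/d\theta,dw/d\theta)=\varepsilon F_1+\varepsilon^2 F_2+O(\varepsilon^3)$. The decisive structural feature of~\eqref{eq:ZHquadratic_ex} is that the linear terms $(\tau^2-4\tau+8)x$ in $\dot x$ and $-(\tau^2-4\tau+8)y$ in $\dot y$ are balanced; combined with the parity selection of Proposition~\ref{propo:trig}, this should force every surviving monomial to cancel in the $r$-average and the $w$-average to vanish. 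Hence the first step is to verify that $\mathbf{f}_1\equiv 0$.

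With $\mathbf{f}_1\equiv 0$ in hand, Proposition~\ref{propo:averagedfunctions} yields $\mathbf{f}_2=T\mathbf{g}_2$, so the guiding system is $\mathbf{g}_2=T^{-1}\mathbf{f}_2$, obtained from the second line of~\eqref{eq:averageformulas} as $\mathbf{f}_2=\int_0^{2\pi}\bigl(F_2+\partial_{\mathbf z}F_1\,y_1\bigr)\,d\theta$, with $y_1(\theta,r,w)=\int_0^\theta F_1\,ds$. The cross term $\partial_{\mathbf z}F_1\,y_1$ raises the polynomial degree of the averaged field above that of the first order, and it is exactly this degree gain that opens room for three nested limit cycles where the first order only produced a center. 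I would then isolate the monodromic equilibrium (normalized, as in the other cases, to a point with eigenvalues $\tau\pm i\omega$), translate it to the origin, bring the linearization into the Jordan form~\eqref{eq:traceHopf}, and set $\tau=0$ to obtain a weak focus.

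The heart of the argument is a weak-focus degenerate Hopf bifurcation of order three. I would compute the Lyapunov coefficients $L_1,L_2,L_3$ of the resulting planar field as functions of $\mu=(\alpha,\beta)$ and check that at $\mu^\ast=(-9/5,-52/75)$ one has $L_1(\mu^\ast)=L_2(\mu^\ast)=0$, $L_3(\mu^\ast)\neq 0$, together with ${\rm rank}\,[\partial(L_1,L_2)/\partial\mu]_{\mu^\ast}=2$. Since the prescribed parameters are explicit rationals, these relations can be certified by direct symbolic computation, and no interval/Poincaré--Miranda certification of the kind used in Theorem~\ref{N(3)=5} should be required. Theorem~\ref{Teo:ChristopherFocus} with $k=3$ then supplies parameter values $(\tau,\mu)$ near $(0,\mu^\ast)$ for which $\mathbf{g}_2$ has three hyperbolic limit cycles around the equilibrium; Theorem~\ref{Teo:InvariantTori} turns each of them into a normally hyperbolic limit torus of~\eqref{eq:ZHquadratic_ex}, and, being generated by a single degenerate Hopf bifurcation, these tori are nested.

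The main obstacle I foresee is the explicit second-order averaging. Already the standard-form field $F_2$ mixes the genuine $\varepsilon^2$ part $X_2$ with the $\varepsilon^2$ corrections coming from expanding $1/\dot\theta$; on top of this, the cross term requires integrating $F_1$ in $\theta$ to obtain $y_1$, multiplying by the Jacobian $\partial_{\mathbf z}F_1$, and integrating the product over a full period, which couples all the quadratic monomials of $X_1$ and yields a bulky cubic-type field whose coefficients depend on $\tau,\alpha,\beta$. Carrying these parameters symbolically throughout, and then confirming that the designed values of $\alpha$ and $\beta$ simultaneously annihilate $L_1$ and $L_2$ while keeping $L_3\neq 0$, is the delicate bookkeeping on which the whole proof hinges.
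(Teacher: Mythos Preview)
Your proposal is correct and follows essentially the same route as the paper: verify $\mathbf{f}_1\equiv 0$, compute the second-order guiding system $\mathbf{g}_2$ via~\eqref{eq:averageformulas}, locate the Hopf equilibrium (the paper uses $(r,w)=(1,0)$ with eigenvalues $\tfrac{\tau}{2}\pm i$), pass to Jordan form at $\tau=0$, and check $L_1=L_2=0$, $L_3\neq 0$ with full rank at $(\alpha,\beta)=(-\tfrac{9}{5},-\tfrac{52}{75})$ to invoke Theorem~\ref{Teo:ChristopherFocus} with $k=3$ and then Theorem~\ref{Teo:InvariantTori}. Your reading that no Poincar\'e--Miranda certification is needed here, since the critical parameters are explicit rationals, also matches the paper's treatment.
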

\begin{proof}
The first averaged function for \eqref{eq:ZHquadratic_ex}, vanishes. The first non-vanishing averaged function is $\mathbf{g}_2(r,w)=(\mathbf{g}_2^1,\mathbf{g}_2^2)$, which by the formulas \eqref{eq:averageformulas} and Propostition \ref{propo:averagedfunctions}, are given by
\begin{equation}
\begin{aligned}
\mathbf{g}_2^1(r,w)&=\frac{1}{2} r \left(r^2-3 w^2-4
   w-1\right),\nonumber\\
\mathbf{g}_2^2(r,w)&=\frac{1}{16} \left(4\tau-8 (2 \alpha+5) w^2-16
   (\beta+1) w^3+r^2 \left(\tau ^2-4 \tau +8\right)-\tau
   ^2 +16 (\tau -1) w-8\right).\nonumber
\end{aligned}
\end{equation}
We have that $(r,w)=(1,0)$ is an equilibrium point of the guiding system $(r',w')=\mathbf{g}_2(r,w)$ such that $D\mathbf{g}_2(1,0)$ has eigenvalues $\frac{\tau}{2}\pm i$.
For $\tau=0$, we have that $(1,0)$ is a Hopf equilibrium point. Assuming $\tau=0$, the coordiante change $u=r+w$, $v=w$ puts the guiding system into its Jordan canonical form:
\begin{eqnarray}
u'&=&-v+\alpha v^2+u^2+\beta
   v^3+\frac{u^3}{2}+\frac{3 u^2 v}{2},\nonumber\\
v'&=&u-\alpha v^2+\frac{u^2}{2}+u v-2 v^2-\beta v^3-v^3,\nonumber
\end{eqnarray}
for which we compute the first five Lyapunov coefficients. We have that
\begin{eqnarray}
L_1&=&-2 \alpha -2 \beta-\frac{4}{3} \alpha^{2} -\frac{2}{3},\nonumber\\
L_2&\equiv&\frac{2 \left(2 \alpha +1\right) \left(2 \alpha -1\right) \left(5 \alpha +9\right) \left(\alpha +2\right)}{45}\;\mod\langle L_1\rangle,\nonumber\\
L_3&\equiv&-\frac{\left(2 \alpha +1\right) \left(2 \alpha -1\right) \left(\alpha +2\right) \left(5140 \alpha^{3}+26468 \alpha^{2}+45619 \alpha +26319\right)}{1890}\;\mod\langle L_1\rangle,\nonumber\\
L_5&\equiv&L_4\equiv 0\;\mod\langle L_1,L_2,L_3\rangle.
\end{eqnarray}
For $(\alpha,\beta)=(-\frac{9}{5},-\frac{52}{75})$, we have $L_1=L_2=0$, $L_3=\frac{19136}{984375}$ and ${\rm rank}\left(\frac{\partial(L_1,L_2)}{\partial(\alpha,\beta)}\right)=2$. By Theorem \ref{Teo:ChristopherFocus}, two limit cycles bifurcate in a neighborhood of the origin for parameters sufficiently close to $(\tau,\alpha,\beta)=(0,-\frac{9}{5},-\frac{52}{75})$. Now, Theorem \ref{Teo:InvariantTori} implies that system \eqref{eq:ZHquadratic_ex} has three normally hyperbolic invariant tori for $\varepsilon>0$ sufficiently small.
\end{proof}

\section*{Acknowledgments}
L.Q. Arakaki is supported by S\~{a}o Paulo Research Foundation (FAPESP) grant 2024/06926-7. L. F. S. Gouveia is supported by UNICAMP's postdoctoral research program (PPPD).  D.D. Novaes is partially supported by S\~{a}o Paulo Research Foundation (FAPESP) grant 2018/13481-0, and by Conselho Nacional de Desenvolvimento Cient\'{i}fico e Tecnol\'{o}gico (CNPq) grant 309110/2021-1.

\section*{Conflict of interest}

\noindent The authors declare that they have no conflict of interest.

\section*{Data availability}

\noindent Data sharing is not applicable to this article as no new data were created or analyzed in this study.

\bibliographystyle{siam}
\bibliography{Referencias.bib}
\end{document}